\newcommand{\R}{\ensuremath{\mathbb{R}}}
\newcommand{\C}{\ensuremath{\mathbb{C}}}
\newcommand{\wx}{\widetilde{x}}
\newcommand{\ot}{\overline{\tau}}
\newcommand{\wy}{\widetilde{y}}
\newcommand{\wB}{\overline{B}}
\newcommand{\wC}{\overline{C}}
\DeclareMathOperator{\re}{Re}
\DeclareMathOperator{\im}{Im}
\DeclareMathOperator{\sgn}{sgn}
\DeclareMathOperator{\e}{e}
\DeclareMathOperator{\x}{x}
\DeclareMathOperator{\Sa}{Sa}
\DeclareMathOperator{\No}{No}
\DeclareMathOperator{\Nd}{Nd}
\DeclareMathOperator{\Fo}{Fo}
\DeclareMathOperator{\Ce}{Ce}
\DeclareMathOperator{\DD}{D}
\newtheorem{thm}{Theorem}
\newtheorem{cor}[thm]{Corollary}
\newtheorem{lema}[thm]{Lemma}
\newtheorem{prop}[thm]{Proposition}
\newtheorem{defi}[thm]{Definition}
\newtheorem{ex}[thm]{Example}
\definecolor{blue}{RGB}{41,5,195}
\title[Limit cycles and invariant surfaces of sewing PWL diff. systems on   $\mathbb{R}^3$]
{On the existence of limit cycles and invariant surfaces of sewing piecewise linear differential systems on   $\mathbb{R}^3$}
\author[B.R. De Freitas]{Bruno R. de Freitas}
\address{Instituto de Matem\'atica e Estat{\'\i}stica, Universidade Federal de
Goi\'as, 74001-970 Goi\^ania, Goi\'as, Brazil}
\email{freitasmat@ufg.br}
\author[J.~C.~Medrado]{Jo\~ao C. Medrado}
\address{Instituto de Matem\'atica e Estat{\'\i}stica, Universidade Federal de
Goi\'as, 74001-970 Goi\^ania, Goi\'as, Brazil}
\email{medrado@ufg.br}
\keywords{Uniqueness of limit cycle, surface of periodic orbits, piecewise
sewing linear differential system}
\subjclass[2010]{Primary: 34C25, 34C07, 37C27}
\begin{document}

\begin{abstract}
We consider  a class of discontinuous piecewise linear differential systems in $\mathbb{R}^3$ with two pieces separated by a plane. In this class we show that there exist differential systems having: $(i)$  a unique limit cycle, $(ii)$ a unique one-parameter family of periodic orbits, $(iii)$ scrolls, $(iv)$ invariant cylinders foliated by orbits which can be periodic or no. 
\end{abstract}

\maketitle

\section{Introduction and Main Results }

The discontinuous piecewise linear differential systems plays an important role inside the theory of nonlinear dynamical systems.   In the models of physical problems or processes is natural to use  the piecewise-smooth dynamical systems when their motion is characterized by smooth flow and eventually, interrupted by instantaneous events (see~\cite{Brogliato}, \cite{Gou2}, \cite{Gou1}). There are many non-smooth processes in this context, for example, impact, switching, sliding and other discrete state transitions.  They are used also in nonlinear engineering models, where certain devices are accurately modeled by them, see for instance \cite{BBCK},  \cite{ML}, \cite{Si} and, references quoted
in these.

\smallskip

There are several papers about the existence of limit cycles for discontinuous piecewise linear differential systems on the plane. The study of this class of differential systems on $\R^3$  is beginning. In \cite{CAR}, \cite{PON2} and \cite{PON1},  the authors consider a family of continuous piecewise linear systems in $\mathbb{R}^3$ and characterize  limit cycles and cones foliated by periodic orbits. In~\cite{FIR} is proved the existence of  limit cycles and invariant cylinders for a class of discontinuous vector field in dimension $2n$.  

\smallskip

In this work, we consider  a class of discontinuous piecewise linear differential systems in $\mathbb{R}^3$ with two pieces separated by a plane and we investigate the existence of limit cycles and invariant surfaces. In this way,  we give conditions for the existence of differential systems having: $(i)$  a unique limit cycle (Figure~\ref{1limitcycle}); $(ii)$ a unique one-parameter family of periodic orbits (Figure~\ref{cone}); $(iii)$ scrolls (Figure~\ref{scroll}.$(a)$); $(iv)$ invariant cylinders foliated by orbits periodic or no (Figure~\ref{scroll}.$(b)$ and \ref{scroll}.$(c)$).  

\smallskip

We note that the existence of the one-parameter family of periodic orbits is an analogous result to the Lyapunov Center Theorem related to smooth vector fields. See, for instance  \cite{LCHR}, \cite{LCR}, \cite{LCR4D} and, \cite{LCH}. 

\smallskip

This work is an extension of~\cite{Medrado} and in our approach we use essentially the Theorem of Rolle for dynamical systems (see \cite{Rolle}) to address  the problem of to show the existence of limit cycles or invariant surfaces to find zeroes of intersection of algebraic curves.

\smallskip

Let $X^\pm(\x)=A^\pm (\x)+B^\pm$ be polynomial vector fields on $\mathbb{R}^3$ of degree one where $\x=(x,y,z)$, $A\pm=(a_{ij}^\pm)_{3\times 3}$, $B=(b_1^\pm,b_2^\pm,b_3^\pm)$ and $h$ the real function given by $h(x,y,z)=z$. We consider the piecewise linear vector field $Z=(X^+,X^-)$ defined by
\begin{equation}\label{sistemainicial}
Z(x,y,z)=\left\{\begin{array}{rc}
X^+(x,y,z),&\mbox{if}\quad (x,y,z)\in \Sigma^+,\\
X^-(x,y,z), &\mbox{if}\quad (x,y,z)\in\Sigma^-,
\end{array}\right.
\end{equation}
where  $\Sigma=h^{-1}(0)$ and $\Sigma^{\pm}=\pm h>0$. We observe that $\mathbb{R}^3=\Sigma\cup\Sigma^+\cup \Sigma^-$.

On $\Sigma^+ (\Sigma^-)$ the orbit $\varphi_Z(t,p)$ of the piecewise vector field $Z$ is given by the orbit $\varphi_X^+(t,p) (\varphi_X^-(t,p))$  of $X^+ (X^-)$, respectively. At points of  $\Sigma$ the orbit of $Z$ is bivalued on the \emph{sewing region} given by $\Sigma_{S}=\{p\in\Sigma;X^+h(p)X^-h(p)>0\}$ or in the \emph{tangency set} $\Sigma_T= \{p\in\Sigma;X^+h(p)=0 \mbox{ or } X^-h(p)=0\}$. If a point $p$ belongs to the $\Sigma_S$ or  $\Sigma_T$, the orbit of $Z$ through this point is given by the union of the orbits of  $X^+$ and $X^-$ through this same point. 

We say that an orbit $\gamma$ of vector field $Z=(X^+,X^-)$ is a $\Sigma-$\textit{sewing periodic orbit} or simply, $\Sigma-$\textit{periodic orbit},  if $\gamma$ is closed and $\gamma\cap \Sigma_S\neq \emptyset$. In $\Sigma\setminus (\Sigma_S\cup\Sigma_T)$  the definition of the orbit of $Z$ is given following the convetion of Filippov in \cite{Fil} .



The tangency set $\Sigma_T$ of $Z$  is formed by the \textit{tangency straight lines} $L_{X^+}$ and $L_{X^-}$,  i.e., $\Sigma_T=L_{X^+}\cup L_{X^-}$, where $L_{X^\pm}=\{ p\in\Sigma: X^\pm h(p)=0\}$.  If $p\in \Sigma_T$ it can be of \textit{fold}  or \textit{cusp} of piecewise vector field $Z$. In the first case we say that $p$ is a \textit{visible (invisible) fold} of $Z$ if $\sgn((\pm X^\pm)^2h(p)>0$  ($<0$), respectively. If $p\in \Sigma_T$ is not a fold of $Z$ but $(X^+)^3h(p)\neq 0$ or  $(X^-)^3h(p)\neq 0$ then $p$ is a cusp of $Z$.


%

In this work, we deal with piecewise linear vector field $Z=(X^+,X^-)$ expressed by \eqref{sistemainicial} with $\Sigma=\Sigma_S \cup \Sigma_T^2$, where  \[ \Sigma_T^2\subset\Sigma_T =\{ p\in \Sigma_T: p \mbox{ is an invisible fold point of }\\ X^+ \mbox{ or } X^-\}.\] So, we get the canonical form for to the vector field $Z=(X^+,X^-)$   expressed by
\begin{equation}\label{formanormal}
\begin{array}{lll}
X^+=(a^+x+b^+z,c^+y+d^+z-1,y),\\
X^-=(a^-x+b^-z+m,c^-y+d^-z+1,y).
\end{array}
\end{equation}
 
 The eigenvalues of $X^+$ and $X^-$ are 
 \[ \lambda_1^\pm=a^\pm, \; \lambda_{2}^\pm=(c^\pm+\sqrt{(c^\pm)^2+4d^\pm})/2 \mbox{ and, }\lambda_{3}^\pm=(c^\pm-\sqrt{(c^\pm)^2+4d^\pm}/2. 
 \]
 
 We observe that $Z=(X^+, X^-)$ has distinct dynamics depending of values of these eigenvalues. In order to analyze all possible dynamic types of $X^\pm$ and become easier the presentation of our results, we define  seven types:
 \begin{description} 
\item[$(i)$ $\Sa$] If $\lambda_2^\pm \lambda_3^\pm<0$.
\item[$(ii)$ $\No$] If $\lambda_2^\pm, \lambda_3^\pm\in\R$ and $\lambda_2^\pm \lambda_3^\pm>0$. 
\item[$(iii)$ $\Nd$] If $\lambda_2^\pm =\lambda_3^\pm$.
\item[$(iv)$ $\Fo$] If $\lambda_2^\pm, \lambda_3^\pm\in\C$ and $\re(\lambda_2^\pm)\im(\lambda_2^\pm)\neq0$.
\item[$(v)$ $\Ce$] If $\lambda_2^\pm, \lambda_3^\pm\in\C$, $\re(\lambda_2^\pm)=0$ and, $\im(\lambda_2^\pm)\neq0$.
\item[$(vi)$ $\DD_1$] If $\lambda_2^\pm \lambda_3^\pm=0$ and $(\lambda_2^\pm)^2 +(\lambda_3^\pm)^2\neq 0$.
\item[$(vii)$ $\DD_2$] If $(\lambda_2^\pm)^2 +(\lambda_3^\pm)^2= 0$. 
 \end{description}
 
 We remark that in the types $\DD_1$ and $\DD_2$, $X^\pm$ does not have equilibrium points. 


\begin{defi}
We say that the piecewise vector field $Z=(X^+,X^-)$ is of  type $(T^+, T^-)$ for  $T^\pm\in\{\mbox{$\Sa$, $\No$, $\Nd$, $\Fo$, $\Ce$, $\DD_1$, $\DD_2$}\}$, if  $X^\pm$ is of type $T^\pm$. 
\end{defi}
%
%
We observe that the type $(T^+, T^-)$ is equal to $(T^-, T^+)$ i.e., there is an equivalence between $(T^+, T^-)$ and $(T^-, T^+)$, for details see \cite{Torres}.

 In this paper we prove the following main theorems.
 
 \begin{thm}\label{teoprincipal}
Let $Z=(X^+,X^-)$ be a piecewise linear vector field of type $(T^+, T^-)$.
The following statements hold.
\begin{enumerate}
\item The vector field $Z$ has scrolls (see Figure~\ref{scroll}$(a)$) if only if is true one of the following conditions:  
\begin{enumerate}
\item $T^+=\mbox{$\Sa$}$ and $T^-\in\{\mbox{$\Sa$, $\No$, $\Nd$, $\Fo$, $\Ce$, $\DD_1$}\}$;
\item $T^+=\mbox{$\No$}$ and $T^-\in\{\mbox{$\No$, $\Nd$, $\Fo$, $\Ce$, $\DD_1$, $\DD_2$}\}$;
\item $T^+=\mbox{$\Nd$}$ and $T^-\in\{\mbox{$\Nd$, $\Fo$, $\DD_1$, $\DD_2$}\}$;
\item $T^+=\mbox{$\Fo$}$ and $T^-= \mbox{$\DD_1$} $;
\item $T^+=\mbox{$\DD_1$}$ and $T^-\in\{\mbox{$\DD_1$,$\DD_2$}\}$;
\end{enumerate}
with $\kappa^2+\lambda^2\neq0$ and $\kappa\lambda\geq0$, or $1+\alpha^2\lambda/\kappa\le0$ and $\kappa\lambda<0$.

\item The vector field $Z$ has at most a unique invariant cylinder (see Figure~\ref{scroll}$(b)$)  if only if is true one of the following conditions:  
\begin{enumerate}
\item $T^+=\mbox{$\Sa$}$ and $T^-\in\{\mbox{$\Sa $, $\No$, $\Nd$, $\Fo$, $\Ce$, $\DD_1$, $\DD_2$}\}$;
\item $T^+=\mbox{$\No$}$ and $T^-\in\{\mbox{$\No$, $\Nd$, $\Fo$, $\Ce$, $\DD_1$}\}$;
\item $T^+=\mbox{$\Nd$}$ and $T^-\in\{\mbox{$\Nd$, $\Fo$, $\Ce$, $\DD_1$}\}$;
\item $T^+=\mbox{$\Fo$}$ and $T^-\in\{\mbox{$\Fo$, $\Ce$, $\DD_1$, $\DD_2$}\}$;
\item $T^+=\mbox{$\Ce$}$ and $T^-=\mbox{$\DD_1$}$;
\end{enumerate}
with $\kappa\lambda<0$ and $1+\alpha^2\lambda/\kappa>0$.

\item The vector field $Z$ has infinitely many invariant cylinders (see Figure~\ref{scroll}$(c)$)  if only if  $\kappa=\lambda=0$ and is true one of the following conditions:
\begin{enumerate}
\item $T^+=\mbox{$\Sa$}$ and $T^-\in\{\mbox{$\Sa$, $\No$, $\Nd$, $\Fo$, $\Ce$, $\DD_1$}\}$;
\item $T^+=\mbox{$\No$}$ and $T^-\in\{\mbox{$\No$, $\Nd$, $\Fo$, $\Ce$, $\DD_1$}\}$;
\item $T^+=\mbox{$\Nd$}$ and $T^-\in\{\mbox{$\Nd$, $\Fo$, $\DD_1$}\}$;
\item $T^+=\mbox{$\Ce$}$ and $T^-\in \{\mbox{$\Ce$, $\DD_2$}\} $;
\item $T^+=\mbox{$\DD_1$}$ and $T^-=\mbox{$\DD_1$}$;
\item $T^+=\mbox{$\DD_2$}$ and $T^-=\mbox{$\DD_2$}$.
\end{enumerate}
\end{enumerate}
The parameters $\kappa$ and $\lambda$ depend on the parameters $a^\pm, b^\pm, c^\pm, d^\pm$ and $m$ of $Z$. These parameters are given in the Tables~\ref{table1} and~\ref{table3}.
\end{thm}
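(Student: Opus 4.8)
\medskip\noindent\textbf{Proof strategy.} The plan is to reduce the statement to a first--return (Poincar\'e) map on the switching plane $\Sigma=\{z=0\}$ and then to a one--variable zero--counting problem. Since both tangency lines $L_{X^\pm}$ coincide with the $x$--axis and, under the standing hypothesis, consist of invisible folds, the orbit of $Z$ through a point of $\Sigma_S\cap\{y>0\}$ enters $\Sigma^+$, flows with $X^+$, returns to $\Sigma$ with $y<0$, flows with $X^-$ into $\Sigma^-$ and returns to $\Sigma_S\cap\{y>0\}$, giving a well--defined full first--return map $P=P^-\circ P^+$. In the canonical form \eqref{formanormal} the $(y,z)$--block is an \emph{autonomous} affine linear planar system, while the $x$--equation $\dot x=a^\pm x+b^\pm z\,(+m)$ is a scalar affine equation \emph{driven} by $z(t)$; hence $P$ has a skew--product structure: it induces a planar return map $p$ on the $y$--half--line, and along any orbit that closes in the $(y,z)$--plane the $x$--coordinate obeys an affine map $x\mapsto\mu x+\nu$ with $\mu=\exp(a^+t^++a^-t^-)$ and $\nu$ an explicit integral of the driving term ($t^\pm$ being the flight times in $\Sigma^\pm$). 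The first task is to write, for each of the seven types $\Sa,\No,\Nd,\Fo,\Ce,\DD_1,\DD_2$, the explicit flow of $X^\pm$ and to extract $P^\pm$, $t^\pm$, $\mu$ and $\nu$ in closed form.

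\smallskip
Next I would set up the dictionary between the geometric objects and these data. A $\Sigma$--periodic orbit is a fixed point of $P$; a \emph{scroll} arises when, on the admissible interval, $p$ is a genuine contraction or expansion and $\mu\neq1$, so that the full orbits near the unique closed orbit spiral monotonically onto or off it; an \emph{invariant cylinder} arises exactly over a planar periodic orbit with neutral $x$--direction, i.e.\ $\mu=1$, and it is foliated by periodic orbits precisely when in addition $\nu=0$. Thus counting invariant cylinders amounts to counting solutions $y_0$ of the system $p(y_0)=y_0$, $\mu(y_0)=1$, i.e.\ intersection points of two algebraic curves, and the three items of the theorem correspond to this intersection being empty (with $p$ hyperbolic), a single point, or the whole admissible interval.

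\smallskip
The analytic core is then the zero--counting. After a reparametrization $s$ of the section adapted to the type of $X^\pm$, the closing equation becomes the vanishing of an explicit elementary function $\mathcal F(s)$ whose coefficients collapse to the quantities $\kappa,\lambda$ and an auxiliary parameter $\alpha$ tabulated in Tables~\ref{table1} and~\ref{table3}. Applying the generalized Rolle property for dynamical systems of \cite{Rolle} to $\mathcal F$ (equivalently, to the difference of the two curves) one shows: $\mathcal F\equiv0$ iff $\kappa=\lambda=0$ (item (3)); $\mathcal F$ has at most one admissible zero when $\kappa\lambda<0$ and $1+\alpha^2\lambda/\kappa>0$ (item (2)); and in the remaining sign patterns $\mathcal F$ has no admissible zero while $p$ is a contraction or expansion (item (1)). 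Finally I would go through Tables~\ref{table1} and~\ref{table3}: for each pair $(T^+,T^-)$ one reads off which sign patterns of $(\kappa,\lambda)$ and which sign of $1+\alpha^2\lambda/\kappa$ are realizable by admissible parameters, which yields the enumerated lists~(a)--(e)/(f); the equivalence $(T^+,T^-)\sim(T^-,T^+)$ halves the work.

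\smallskip
The main obstacle is twofold. The case analysis over the $7\times7$ type combinations is long: each combination has its own flow, flight times and closing function, and the equilibrium--free types $\DD_1,\DD_2$ and the borderline case $1+\alpha^2\lambda/\kappa=0$ need separate treatment. More essential is the sharp zero--count of $\mathcal F$: showing that $\mathcal F$ is monotone (or has a controlled number of critical points) on the admissible interval so that Rolle forces ``at most one zero,'' and pinning down exactly when $\mathcal F$ vanishes identically. Once $\mathcal F,\kappa,\lambda,\alpha$ are in hand, the remainder is organized bookkeeping.
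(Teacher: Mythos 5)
Your reduction to a skew--product return map is the right starting point and matches the paper's setup (Proposition~\ref{sollinear} and Corollary~\ref{cilindros}), but your dictionary between the geometry and the map data contains a genuine error that would derail the whole count. You assert that an invariant cylinder arises over a fixed point of the planar return map $p$ \emph{only when} the $x$--multiplier satisfies $\mu=1$. This is false: because the $(y,z)$--block of~\eqref{formanormal} does not involve $x$, the full preimage $\{(x,y,z):(y,z)\in\gamma\}$ of any closed orbit $\gamma$ of the $(y,z)$--subsystem is invariant for $Z$ regardless of $\mu$; this is exactly the content of Corollary~\ref{cilindros}. Hence counting invariant cylinders means counting fixed points of $p$ alone, i.e.\ solutions of a single equation, not of the overdetermined system $\{p(y_0)=y_0,\ \mu(y_0)=1\}$, which generically has no solutions and would wrongly predict ``no cylinders'' in case~(2). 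The roles of $\mu$ and $\nu$ are confined to Theorem~\ref{teoprincipal2}: $\mu\neq1$ gives one limit cycle on the cylinder, $\mu=1$ with $\nu=0$ gives a cylinder foliated by periodic orbits, and $\mu=1$ with $\nu\neq0$ a cylinder without closed orbits.

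Second, the analytic core is left essentially unaddressed. The one--variable closing equation for $p(y_0)=y_0$ is transcendental, and applying Rolle directly to an ``elementary function $\mathcal F(s)$'' merely relocates the difficulty to the zeros of $\mathcal F'$. The paper's key device is to algebraize: the substitutions of Lemmas~\ref{selacentrosup} and~\ref{selacentroinf} convert the problem into the intersection, inside $\Delta=(0,1)\times(0,1)$, of two curves $C_f=\{w=v^\alpha\}$ and $C_F=\{W=V^\beta\}$; one then takes the explicit planar vector field $\widetilde X=(v,\alpha w)$, of which $C_f$ is an integral curve, computes $\nabla F\cdot\widetilde X=f_1f_2$ with $f_2=\kappa w(v-1)^2+\lambda v(w-1)^2$, and applies Theorem~\ref{rolle}: two intersection points of $C_f$ and $C_F$ would force a tangency, i.e.\ a point of $C_F\cap C_{f_2}$, and the locus $C_f\cap C_{f_2}$ is controlled by the sign of $1+\alpha^2\lambda/\kappa$. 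This construction is where $\kappa$, $\lambda$ and the quoted inequalities actually come from; without it (or a substitute of comparable strength) your plan does not yield the sharp trichotomy ``none / at most one / infinitely many.''
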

{\renewcommand{\arraystretch}{1.1}
\begin{table}[!h]
\caption{We present the expressions of  $\kappa$ and $\lambda$ that depend of parameters of the vector field $Z$. We note that $\alpha$ and $\beta$ are given in Lemmas~\ref{selacentrosup} and~\ref{selacentroinf}, respectively.}
\begin{center}
\begin{tabular}{c|l}
\hline
 (\mbox{$T^+$,$T^-$}) &  \hskip4cm $\kappa$ and $\lambda$\\
\hline
 (\mbox{$\Sa$,$\Sa$}) & $\begin{array}{ll}
                                         \kappa=&\alpha^2(\alpha c^-+\beta c^+-c^+-c^-)(\alpha\beta c^--\beta c^+-\beta c^-+c^+), \\
                                          \lambda=&(\alpha\beta c^+-\alpha c^+-\alpha c^-+c^-)(\alpha\beta c^++\alpha\beta c^--\alpha c^+-\beta c^-).

                                        \end{array}$ \\  
\hline  
   (\mbox{$\Sa$,$\No$}) & $\begin{array}{ll}
                                          \kappa=& -(\beta c^+-c^++(-1+\alpha)c^-)((c^++(-\alpha+1)c^-)\beta-c^+)\alpha^2, \\
                                          \lambda = & (((c^++c^-)\beta-c^+)\alpha-\beta c^-)((\beta c^+-c^+-c^-)\alpha+c^-).
                                        \end{array}$ \\  
\hline  
   (\mbox{$\Sa$,$\Nd$}) & $\begin{array}{ll}
                                          \kappa=& \alpha^2((-\alpha+1)\beta+c^+)^2, \\
                                          \lambda = & -((\beta+c^+)\alpha-\beta)^2.
                                        \end{array}$ \\  
\hline
(\mbox{$\Sa$,$\Fo$}) & $\begin{array}{ll}
                                          \kappa=& 4\alpha^2((\beta^2+1/4)(-1+\alpha)^2(c^-)^2-c^+(-1+\alpha)c^-+(c^+)^2),\\
                                          \lambda = & -(4(\beta^2+1/4))(-1+\alpha)^2(c^-)^2-4c^+\alpha(-1+\alpha)c^--4(c^+)^2\alpha^2.
                                        \end{array}$ \\ 
\hline
(\mbox{$\Sa$,$\Ce$}) & $\begin{array}{ll}
                                          \kappa=& ((-1+\alpha)^2\beta^2+(c^+)^2)\alpha^2, \\
                                          \lambda = & -(-1+\alpha)^2\beta^2-(c^+)^2\alpha^2.
                                        \end{array}$ \\ 
\hline 
(\mbox{$\Sa$,$\DD_1$}) & $\begin{array}{ll}
                                       \kappa=& -\alpha((-\alpha+1)c^-+c^+), \\
                                       \lambda = &(c^++c^-)\alpha-c^-.
                                       \end{array}$ \\
\hline
 (\mbox{$\No$,$\No$}) & $\begin{array}{ll}
                                          \kappa=& -(\beta c^+-c^++(-\alpha-1)c^-)((c^++(1+\alpha)c^-)\beta-c^+)\alpha^2, \\
                                          \lambda = & ((\beta c^+-c^+-c^-)\alpha-c^-)(((c^++c^-)\beta-c^+)\alpha+\beta c^-).
                                        \end{array}$ \\  
\hline
(\mbox{$\No$,$\Nd$}) & $\begin{array}{ll}
                                          \kappa=& ((1+\alpha)\beta+c^+)^2\alpha^2,\\
                                          \lambda = & -((\beta+c^+)\alpha+\beta)^2.
                                        \end{array}$ \\  
          \hline
          (\mbox{$\No$,$\Fo$}) & $\begin{array}{ll}
                                                    \kappa=& (4((1+\alpha)^2(\beta^2+1/4)(c^-)^2+c^+(1+\alpha)c^-+(c^+)^2))\alpha^2,\\
                                                    \lambda =& -4(1+\alpha)^2(\beta^2+1/4)(c^-)^2-4c^+\alpha(1+\alpha)c^--4(c^+)^2\alpha^2 .
                                                  \end{array}$ \\  
                                                                          
          \hline
          (\mbox{$\No$,$\Ce$}) & $\begin{array}{ll}
                                                    \kappa=&\alpha^2((1+\alpha)^2\beta^2+(c^+)^2), \\
                                                    \lambda =& (-\beta^2-c^+)^2)\alpha^2-2\alpha\beta^2-\beta^2.
                                                  \end{array}$ \\                               
       \hline
       (\mbox{$\No$,$\DD_1$}) & $\begin{array}{ll}
                                                 \kappa=&\alpha((1+\alpha)c^-+c^+), \\
                                                 \lambda =& (-c^+-c^-)\alpha-c^-.
                                               \end{array}$ \\  
                                                                       
       \hline   
(\mbox{$\Nd$,$\Nd$}) & $\begin{array}{ll}
                                          \kappa=& -(\alpha+\beta),\\
                                          \lambda = & 2\beta.
                                        \end{array}$ \\  
                                                                
\hline
(\mbox{$\Nd$,$\Fo$}) & $\begin{array}{ll}
                                          \kappa=& -(4\beta^2(c^-)^2+4\alpha^2+4\alpha c^-+(c^-)^2),\\
                                          \lambda =& (8\beta^2+2)(c^-)^2+4\alpha c^-.
                                        \end{array}$ \\  
\hline
(\mbox{$\Nd$,$\DD_1$}) & $\begin{array}{ll}
                                          \kappa=&-(\alpha\beta+1),\\
                                          \lambda =&1.
                                        \end{array}$ \\                                                              
\hline
(\mbox{$\DD_1$,$\Fo$}) &    $\begin{array}{l}  \kappa=4(1+c^-\alpha)+(c^-)^2\alpha^2(4\beta^2+1),\\  \lambda=-(c^-)^2\alpha^2(4\beta^2+1).\end{array}$\\ 
\hline
(\mbox{$\DD_1$,$\DD_1$}) &    $\kappa=\lambda=\alpha+\beta.$\\
\hline                   
\end{tabular}
\end{center}
\label{table1}
\end{table}                                                          

\begin{table}[!h]
 \caption{In this table are given $\kappa$ and $\lambda$ for the cases that they are constant.}
 \begin{center}
 \begin{tabular}{l|l} 
\hline
 \hskip4cm (\mbox{$T^+$,$T^-$}) &  $ \hskip1cm  \kappa$ and $\lambda$\\
\hline
(\mbox{$\Sa$,$\DD_2$}), (\mbox{$\Ce$,$\Nd$}),  (\mbox{$\Ce$,$\Fo$}),  (\mbox{$\Ce$,$\DD_1$}) \mbox{ and } (\mbox{$\DD_2$,Fo})   &   $\kappa=1$ \mbox{ and } $\lambda=-1$.    \\ 
  \hline
(\mbox{$\No$,$\DD_2$}), (\mbox{$\Nd$,$\DD_2$})  and  (\mbox{$\DD_1$,$\DD_2$})  &   $\kappa=\lambda=1$.  \\
\hline
(\mbox{$\Ce$,$\Ce$}), (\mbox{$\DD_2$,$\Ce$}) and (\mbox{$\DD_2$,$\DD_2$}) &   $\kappa=\lambda=0$. \\
\hline
  \end{tabular}
  \end{center}
  \label{table3}
  \end{table} 
{\renewcommand{\arraystretch}{1}  
 
\begin{figure}[!htb]
\begin{center}
\def\svgwidth{0.75\textwidth}
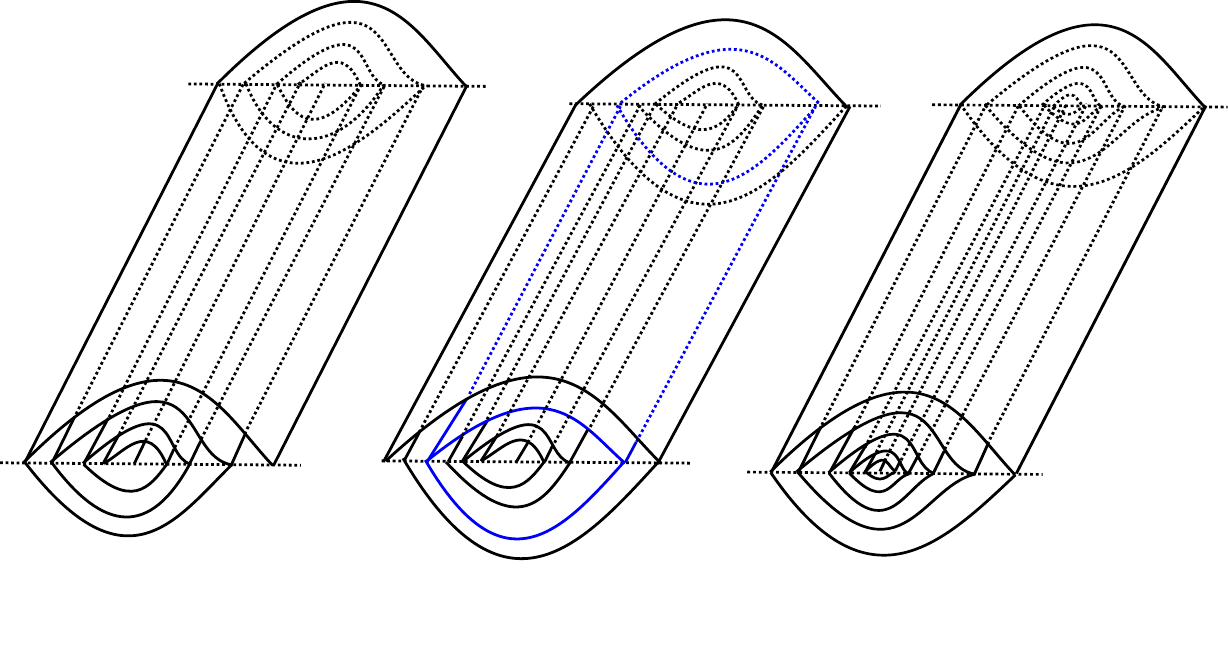
\end{center}
\caption{Invariant surfaces of \eqref{formanormal}: $(a)$ Scroll. $(b)$ A unique invariant cylinder.  $(c)$ Infinitely many invariant cylinders.}\label{scroll}
\end{figure}
%
%
%

\begin{thm}\label{teoprincipal2}
Let $Z=(X^+,X^-)$ be a piecewise linear vector field with $X^+$ and $X^-$ defined in~\eqref{formanormal}. The following statements hold.
\begin{enumerate}
\item If $(a^+)^2+(a^-)^2=0$ or $Z$ has no invariant cylinder then there are not limit cycles. 

\item If $(a^+)^2+(a^-)^2\neq0$ and $Z$ has at most a unique invariant cylinder then $Z$ has at most a unique limit cycle in this cylinder.

\item If $(a^+)^2+(a^-)^2\neq0$, $a^+a^-\geq0$ and $Z$ has infinitely many invariant cylinders then there is an invariant surface formed of periodic orbits, where each periodic orbit is contained in an invariant cylinder.

\end{enumerate}

\end{thm}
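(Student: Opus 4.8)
\emph{Proof strategy.} The plan is to reduce everything to a one-dimensional first-return map, exploiting that in the canonical form~\eqref{formanormal} the $y$- and $z$-components of both $X^{+}$ and $X^{-}$ are independent of $x$. Writing $\pi(x,y,z)=(y,z)$, the map $\pi$ carries orbits of $Z$ onto orbits of the planar piecewise linear vector field $W=(W^{+},W^{-})$ with $W^{+}(y,z)=(c^{+}y+d^{+}z-1,\,y)$ and $W^{-}(y,z)=(c^{-}y+d^{-}z+1,\,y)$, separation line $\{z=0\}$, along which $\dot z=y$; thus $W$ crosses the line where $y\neq0$ and has only invisible folds at $\{y=z=0\}$. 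Every $\Sigma$-periodic orbit of $Z$ projects to a $\Sigma$-periodic orbit of $W$, and, as established in the proof of Theorem~\ref{teoprincipal}, the invariant cylinders of $Z$ are exactly the sets $\pi^{-1}(\gamma)$ with $\gamma$ a periodic orbit of $W$. I would first record that a periodic orbit of $Z$ lying entirely in $\Sigma^{+}$ or in $\Sigma^{-}$ would be a periodic orbit of an affine system, hence never isolated, so any limit cycle of $Z$ crosses $\Sigma_{S}$ and projects to a periodic orbit of $W$. In particular, if $Z$ has no invariant cylinder then $W$ has no periodic orbit and $Z$ has none either, which gives the second half of item~(1).

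Next I would compute the return map on a cylinder. Let $\gamma$ be a periodic orbit of $W$ with period $\tau_{\gamma}=\tau^{+}+\tau^{-}$, where $\tau^{\pm}>0$ is the time spent in $\Sigma^{\pm}$ (a periodic orbit of a two-zone planar piecewise linear system meets the separation line at exactly two crossing points), and set $C=\pi^{-1}(\gamma)$. Fixing one crossing point $q_{0}$, the fibre $\{(x,q_{0}):x\in\R\}$ is a global cross-section of $Z|_{C}$, and the orbit through $(x,q_{0})$ returns to this fibre after time $\tau_{\gamma}$ since its $(y,z)$-projection does. Along the arc in $\Sigma^{+}$ the $x$-equation is the scalar linear nonhomogeneous ODE $\dot x=a^{+}x+b^{+}z^{+}(t)$, with $z^{+}(t)$ the known $z$-component of $\gamma$, so $x(\tau^{+})$ is an affine function of $x(0)$ of slope $e^{a^{+}\tau^{+}}$; composing with the analogous map on the arc in $\Sigma^{-}$ (slope $e^{a^{-}\tau^{-}}$) yields a first-return map
\[
R(x)=\mu_{\gamma}\,x+\nu_{\gamma},\qquad \mu_{\gamma}=e^{a^{+}\tau^{+}+a^{-}\tau^{-}}>0,
\]
with $\mu_{\gamma},\nu_{\gamma}$ depending only on $\gamma$. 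Periodic orbits of $Z$ in $C$ correspond to fixed points of $R$, and such an orbit is a limit cycle exactly when $\mu_{\gamma}\neq1$ (the fixed point is then hyperbolic, and isolated among all periodic orbits whenever $C$ is the only cylinder); if $\mu_{\gamma}=1$ then $R$ is a translation, so $C$ contains no periodic orbit or is foliated by non-isolated ones.

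The three items then follow from the sign of $a^{+}\tau^{+}+a^{-}\tau^{-}$. If $(a^{+})^{2}+(a^{-})^{2}=0$ then $\mu_{\gamma}=1$ for every periodic orbit $\gamma$ of $W$, so no cylinder contains a limit cycle and $Z$ has none; with the first paragraph this proves item~(1). For item~(2), any limit cycle lies in the (unique, if it exists) cylinder $C=\pi^{-1}(\gamma)$, where $R$ has one hyperbolic fixed point when $\mu_{\gamma}\neq1$ and none relevant when $\mu_{\gamma}=1$, giving at most one limit cycle, located in $C$. For item~(3), Theorem~\ref{teoprincipal}(3) provides a one-parameter continuum of nested periodic orbits $\{\gamma_{r}\}_{r\in I}$ of $W$, with cylinders $C_{r}=\pi^{-1}(\gamma_{r})$; since $\tau_{r}^{\pm}>0$ and $a^{+}a^{-}\ge0$ with $(a^{+},a^{-})\neq(0,0)$, we have $a^{+}\tau_{r}^{+}+a^{-}\tau_{r}^{-}\neq0$ and hence $\mu_{\gamma_{r}}\neq1$, so each $C_{r}$ carries the unique periodic orbit $\gamma_{r}^{*}$ at level $x_{r}^{*}=\nu_{\gamma_{r}}/(1-\mu_{\gamma_{r}})$, and $\bigcup_{r\in I}\gamma_{r}^{*}$ is an invariant surface made of periodic orbits, each contained in a cylinder.

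The main obstacle I anticipate is twofold. First, knowing which types $(T^{+},T^{-})$ realize each cylinder scenario is exactly the content of Theorem~\ref{teoprincipal} and may be invoked; the residual work there is only to confirm that the degenerate configurations (nodes, cusps, the types $\DD_{1},\DD_{2}$) do not produce periodic orbits off $\Sigma$. Second, in item~(3) one must check that $\bigcup_{r\in I}\gamma_{r}^{*}$ is a genuine embedded surface rather than a bare union of curves, which amounts to the continuous — indeed analytic — dependence of $\tau_{r}^{\pm}$, $\mu_{\gamma_{r}}$ and $\nu_{\gamma_{r}}$ on $r$; this follows from the explicit integration of the linear flows on the two sides together with the parametrization of $\{\gamma_{r}\}$ underlying Theorem~\ref{teoprincipal}. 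Writing $\nu_{\gamma}$ out explicitly and the remaining case-checks are routine.
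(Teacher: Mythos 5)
Your proposal is correct and follows essentially the same route as the paper: on each invariant cylinder the $x$-coordinate return map is affine with multiplier $\e^{a^+\tau^++a^-\tau^-}$ (the paper writes this as the intersection of the two straight lines $x_1=\rho x_0+B$ and $x_1=\xi^{-1}x_0+C$ with $\rho=\e^{a^+\tau}$, $\xi=\e^{a^-\ot}$), and the three items follow from whether this multiplier can equal $1$, exactly as you argue. The only cosmetic difference is that you phrase the reduction uniformly via the projection $\pi(x,y,z)=(y,z)$ rather than through the paper's type-by-type boundary value problems, but the underlying mechanism is identical.
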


%
%
%
%
%
%
The statement~$(3)$ of Theorem~\ref{teoprincipal2} is similar a Lyapunov Center Theorem related to smooth vector fields. 


In the Section~\ref{formacanonica} we obtain the  canonical form~\eqref{formacanonica} and characterize the tangencial sets (straight lines of tangency) formed by orbits of $Z$ and $\Sigma$. The Theorems~\ref{teoprincipal} and~\ref{teoprincipal2}  will be proved in Section~\ref{provateoprincipal}.

\section{The tangency straight lines and canonical forms of $Z$}\label{formacanonica}
The purpose of this section is to characterize the tangency straight lines $L_{X^\pm}$ and to obtain a canonical form to the piecewise vector field~\eqref{sistemainicial} such that $\Sigma=\Sigma_S \cup \Sigma_T^2$ with $\Sigma_T^2=L_{X^+}^2 \cup L_{X^-}^2$, where $L_{X^\pm}^2 = \{p\in \Sigma: X^\pm h(p)=0 \mbox{ and } (\pm X^\pm)^2h(p)<0 \}$  and the sets $\Sigma_T, L_{X^+}$ and $L_{X^-}$ are not empty.   As $L_{X^+}$ and $L_{X^-}$ are not empty  we get that  $(a_{31}^\pm)^2+(a_{32}^\pm)^2\neq0$. In the sequel, we will assume these hypothesis.

\begin{lema}\label{retasiguais}
Let $Z=(X^+,X^-)$ be defined in~\eqref{sistemainicial} with $L_{X^\pm}$ the tangency straight lines of $X^\pm$. If  $X^+h(p)X^-h(p)\ge0$, for all $p\in\Sigma$ then the tangency straight lines are the same, i.e., $L_{X^+}\equiv L_{X^-}$.
\end{lema}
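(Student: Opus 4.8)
The plan is to reduce the statement to an elementary fact about affine functions on the plane. Since $h(x,y,z)=z$, for either choice of sign the Lie derivative $X^\pm h$ is just the third coordinate of $X^\pm$; writing $X^\pm(\x)=A^\pm\x+B^\pm$ and restricting to $\Sigma=h^{-1}(0)=\{z=0\}$, one sees that $X^\pm h$ coincides on $\Sigma$ with the affine function $f^\pm\colon\R^2\to\R$, $f^\pm(x,y)=a_{31}^\pm x+a_{32}^\pm y+b_3^\pm$. By the standing hypothesis $(a_{31}^\pm)^2+(a_{32}^\pm)^2\neq0$ (equivalently, that $L_{X^\pm}$ are genuine straight lines), each $f^\pm$ is a nonconstant affine function, so $L_{X^\pm}=\{(x,y,0):f^\pm(x,y)=0\}$ is a line in the plane $\Sigma$, and the hypothesis $X^+h(p)X^-h(p)\ge0$ for all $p\in\Sigma$ becomes $f^+(x,y)\,f^-(x,y)\ge0$ for all $(x,y)\in\R^2$.

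First I would record the elementary claim: if $f^+$ and $f^-$ are nonconstant affine functions on $\R^2$ with $f^+f^-\ge0$ everywhere, then $\{f^+=0\}=\{f^-=0\}$. To prove it, put $\ell=\{f^+=0\}$ and suppose, for contradiction, that $f^-$ does not vanish identically on $\ell$; pick $p_0\in\ell$ with $f^-(p_0)\neq0$. By continuity there is a ball $B$ around $p_0$ on which $f^-$ has constant sign. On the other hand, since $f^+(p_0)=0$ and $\nabla f^+=(a_{31}^+,a_{32}^+)\neq0$, the map $t\mapsto f^+(p_0+t\nabla f^+)=t|\nabla f^+|^2$ changes sign at $t=0$, so $f^+$ takes both positive and negative values inside $B$. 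Hence $f^+f^-$ takes both signs on $B$, contradicting $f^+f^-\ge0$. Therefore $f^-\equiv0$ on $\ell$; as $f^-$ is nonconstant affine, its zero set $\{f^-=0\}$ is a line, and a line containing $\ell$ must equal $\ell$. This yields $L_{X^-}=\{f^-=0\}=\ell=\{f^+=0\}=L_{X^+}$, which is the desired conclusion.

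The computation in the first step is immediate, and the planar claim is the only substantive ingredient, so I do not expect any genuine obstacle; the only point requiring care is to invoke the standing nondegeneracy hypothesis so that the level sets $\{f^\pm=0\}$ are honest lines and $f^+$ genuinely changes sign across $\ell$. As an alternative to the sign-change argument one could split into two cases according to whether the gradients $\nabla f^+$ and $\nabla f^-$ are parallel: if not, the two lines cross and $f^+f^-$ is negative on two of the four resulting sectors; if so, $f^-=\mu f^++c$ with $\mu\neq0$, and $f^+(\mu f^++c)$ changes sign unless $c=0$. The argument above handles both cases uniformly, which is why I would prefer it.
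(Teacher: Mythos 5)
Your proof is correct and follows essentially the same route as the paper: the paper's one-line argument rests precisely on the observation that each tangency line separates $\Sigma$ into two regions where $X^{\pm}h$ has opposite signs, which is exactly the sign-change fact you formalize via the affine functions $f^{\pm}$. Your write-up is simply a more careful, rigorous version of the paper's argument, correctly invoking the standing nondegeneracy hypothesis $(a_{31}^{\pm})^2+(a_{32}^{\pm})^2\neq 0$.
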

\begin{proof}
Each tangency straight line $L_{X^+}$ or $L_{X^-}$ separate $\Sigma$ in two regions where the signals of $X^\pm(p)$ are opposite. So, by the hypothesis we get $L_{X^+}\equiv L_{X^-}$ and without loss of generality we can assume that  the tangency straight line contains the origin. Moreover, we get that $L_X^\pm$ in $\Sigma$ is given by $y=0$.

\end{proof}
 Using the Lemma~\ref{retasiguais} and  rescaling the time, we have that the vector fields $X^+$ and $X^-$ can be written by
 \begin{equation}
 \label{quasenormal}
 \begin{array}{l}
 X^+(x,y,z)=(a_{11}^+x+ a_{12}^+y+a_{13}^+z+b_1^+,a_{21}^+x+a_{22}^+y+a_{23}^+z+b_2^+,y+a_{33}^+z),\\
 X^-(x,y,z)=(a_{11}^-x+ a_{12}^-y+a_{13}^-z+b_1^-,a_{21}^-x+a_{22}^-y+a_{23}^-z+b_2^-,y+a_{33}^-z).
 \end{array}
 \end{equation}
 

\begin{lema}\label{confreta}
Let $X^+$ be as given in~\eqref{quasenormal}. The tangency straight line $L_{X^+}$ is characterized by one of the following statements:
\begin{enumerate}
\item If $a_{21}^+=0$ then for all $p\in L_{X^+}$, $p$ is a visible or  invisible fold when $b_2^+> 0$ or  $b_2^+<0$, respectively;

\item  If $a_{21}^+=b_2^+=0$ then $L_{X^+}$ is invariant by $X^+$;

\item If $a_{21}^+\neq 0$ then there is a unique point $p\in L_{X^+}$ such that  it is a cusp (if $a_{21}^+b_1^+-a_{11}^+b_2^+\neq 0$) or a singular point of $X^+$ (if $a_{21}^+b_1^+-a_{11}^+b_2^+= 0$).

\end{enumerate}
\end{lema}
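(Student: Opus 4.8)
The plan is to work directly in the coordinates supplied by the canonical form~\eqref{quasenormal}. Since $h(x,y,z)=z$, the first transversality function of $X^+$ is its third component, $X^+h(p)=y+a_{33}^+z$, so on $\Sigma=\{z=0\}$ it equals $y$ and hence the tangency straight line is $L_{X^+}=\{(x,0,0):x\in\R\}$, i.e. the $x$-axis. The whole statement then reduces to evaluating the iterated Lie derivatives $X^+h$, $(X^+)^2h$, $(X^+)^3h$, together with $X^+$ itself, at the points of $L_{X^+}$ and reading off which of the conditions ``fold / cusp / equilibrium'' from the definitions in Section~1 is satisfied.

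First I would compute $(X^+)^2h$. Differentiating $X^+h=y+a_{33}^+z$ along $X^+$ gives $(X^+)^2h=(a_{21}^+x+a_{22}^+y+a_{23}^+z+b_2^+)+a_{33}^+(y+a_{33}^+z)$, so on $L_{X^+}$ (where $y=z=0$) one has $(X^+)^2h\big|_{L_{X^+}}=a_{21}^+x+b_2^+$. Next I would compute $(X^+)^3h$ and evaluate it at a point of $L_{X^+}$ where $(X^+)^2h$ vanishes; at such a point the two contributions proportional to $(X^+)^2h$ and to $y+a_{33}^+z$ drop out, leaving $(X^+)^3h=a_{21}^+(a_{11}^+x+b_1^+)$. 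Finally I would record $X^+$ restricted to $L_{X^+}$, namely $X^+(x,0,0)=(a_{11}^+x+b_1^+,\,a_{21}^+x+b_2^+,\,0)$.

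With these formulas the case analysis is immediate. If $a_{21}^+=0$, then $(X^+)^2h\equiv b_2^+$ on $L_{X^+}$, so by the sign convention ($\sgn(X^+)^2h(p)>0$ for a visible, $<0$ for an invisible fold) every point of $L_{X^+}$ is a visible fold when $b_2^+>0$ and an invisible fold when $b_2^+<0$, giving statement~$(1)$; if moreover $b_2^+=0$, then $X^+(x,0,0)=(a_{11}^+x+b_1^+,0,0)$ is tangent to the $x$-axis at every point, so $L_{X^+}$ is invariant, which is statement~$(2)$. If $a_{21}^+\neq0$, then $(X^+)^2h\big|_{L_{X^+}}=a_{21}^+x+b_2^+$ vanishes at the unique point $p_0=(-b_2^+/a_{21}^+,0,0)$ and is nonzero elsewhere (of opposite signs on the two sides of $p_0$), so every point of $L_{X^+}$ other than $p_0$ is a fold; at $p_0$ one gets $(X^+)^3h(p_0)=a_{21}^+b_1^+-a_{11}^+b_2^+$, and since the first component of $X^+(p_0)$ equals $a_{11}^+(-b_2^+/a_{21}^+)+b_1^+=(a_{21}^+b_1^+-a_{11}^+b_2^+)/a_{21}^+$, the point $p_0$ is a cusp when $a_{21}^+b_1^+-a_{11}^+b_2^+\neq0$ and a singular point of $X^+$ when $a_{21}^+b_1^+-a_{11}^+b_2^+=0$, which is statement~$(3)$.

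There is no serious obstacle here: the argument is a bookkeeping exercise with iterated Lie derivatives along an explicit linear field. The only points requiring care are that the sign convention for visible/invisible folds must be applied with the orientation $+X^+$ appropriate to the field $X^+$, and that in case~$(3)$ one must check that when $(X^+)^2h(p_0)=0$ the third-order derivative genuinely simplifies to $a_{21}^+b_1^+-a_{11}^+b_2^+$ with no surviving extra terms, since this is exactly what legitimizes the cusp-versus-equilibrium dichotomy.
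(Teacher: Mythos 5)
Your proposal is correct and follows essentially the same route as the paper: both identify $L_{X^+}=\{y=z=0\}$ from $X^+h$, compute $(X^+)^2h(x,0,0)=a_{21}^+x+b_2^+$ and $(X^+)^3h(-b_2^+/a_{21}^+,0,0)=a_{21}^+b_1^+-a_{11}^+b_2^+$, and then read off the three cases. Your version merely adds the explicit verification of the visible/invisible sign convention and of the invariance of $L_{X^+}$ when $a_{21}^+=b_2^+=0$, which the paper leaves implicit.
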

 \begin{proof}
From \eqref{quasenormal} we get $X^+h(x,y,0)=y$ that implies $L_{X^+}=\{y=z=0\}$ and $(X^+)^2h(x,0,0)=a_{21}^+x+b_2^+$. If $a_{21}^+=0$ we get two possibilities: $(i)$  when $b_2^+\neq 0$, the point  $(x,0,0)$ is a fold for all $x\in \R$ (statement 1); $(ii)$ when $b_2^+=0$, we have that $L_{X^+}$ is invariant by the vector field $X^+$ (statement 2). Finally, for $a_{21}\neq 0$, follows from $X^2h(x,0,0)=0$ that $(X^+)^3h(-b_2^+/a_{21}^+,0,0)=-a_{11}^+b_2^++b_1^+a_{21}^+$.  Thus, if $-a_{11}^+b_2^++b_1^+a_{21}^+\neq 0$  the point $(-b_2^+/a_{21}^+,0,0)$ is a cusp, otherwise is a singular point.
 \end{proof}


\begin{prop}\label{propformacanonica}
Let $Z=(X^+,X^-)$ the vector field defined as~\eqref{sistemainicial}. Suppose that $X^+hX^-h(p)\ge0$ and that $L_{X^\pm}\equiv L_{X^\pm}^2$. Then the vectors field $X^+$ and $X^-$ can be written as~\eqref{formanormal}.
\end{prop}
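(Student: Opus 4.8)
The plan is to start from the form \eqref{quasenormal}, which we may already assume by Lemma~\ref{retasiguais} and the rescaling of time carried out just before Lemma~\ref{confreta}, and then exploit the extra hypothesis $L_{X^\pm}\equiv L_{X^\pm}^2$, i.e.\ that every tangency point of $X^\pm$ on $\Sigma$ is an \emph{invisible fold}. By Lemma~\ref{confreta} applied to $X^+$ (and symmetrically to $X^-$, after the sign change coming from the $(-X^-)^2 h<0$ convention), this forces us into case~(1) of that lemma with the fold invisible: $a_{21}^+=0$ and $b_2^+<0$ (respectively $a_{21}^-=0$ and $b_2^->0$). Indeed cases~(2) and~(3) of Lemma~\ref{confreta} produce invariant lines, cusps or singular points on $\Sigma_T$, none of which is an invisible fold, so they are excluded; and within case~(1) invisibility of the fold fixes the sign of $b_2^\pm$.

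Once $a_{21}^\pm=0$ is known, the third coordinate of \eqref{quasenormal} is already $y+a_{33}^\pm z$, and the first two coordinates are $(a_{11}^\pm x + a_{12}^\pm y + a_{13}^\pm z + b_1^\pm,\ a_{22}^\pm y + a_{23}^\pm z + b_2^\pm)$. The next step is a normalization: since $b_2^+<0$ and $b_2^->0$, I rescale the time variable (a positive rescaling on each side, which preserves the phase portrait and the orientation of crossings) so that $b_2^+=-1$ and $b_2^-=+1$; the leftover freedom in $b_2^-$ after fixing $b_2^+$ is handled by noting the two rescalings can be chosen independently on $\Sigma^\pm$ because $Z$ is only piecewise defined. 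Then an affine change of coordinates in $\Sigma$ of the form $(x,y)\mapsto(x + \mu y,\, y)$ together with a shift/scaling in $x$, which leaves $\Sigma=\{z=0\}$, the function $h$, and the tangency line $\{y=z=0\}$ invariant, is used to kill the $a_{12}^\pm y$ term and to absorb $b_1^+$, leaving the residual constant in the first coordinate of $X^-$ as the parameter $m$. After these reductions the first coordinate becomes $a^\pm x + b^\pm z (+m$ on the minus side), the second becomes $c^\pm y + d^\pm z \mp 1$, and the third $y + a_{33}^\pm z$; a final rescaling of $z$ (or equivalently a shear $z\mapsto z$, $y\mapsto y - a_{33}^\pm z$ done consistently) removes $a_{33}^\pm$, yielding exactly \eqref{formanormal}.

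The steps in order: (1) invoke Lemma~\ref{confreta} on each of $X^\pm$ and use the invisible-fold hypothesis to deduce $a_{21}^\pm=0$, $b_2^+<0$, $b_2^->0$; (2) a pair of independent positive time rescalings on $\Sigma^+$ and $\Sigma^-$ to set $b_2^+=-1$, $b_2^-=+1$; (3) an affine coordinate change in the $\Sigma$-variables $(x,y)$ preserving $\{z=0\}$ and $\{y=z=0\}$ to eliminate the $a_{12}^\pm y$ terms and $b_1^+$, renaming the surviving constant $b_1^-$ as $m$ and the remaining entries as $a^\pm,b^\pm,c^\pm,d^\pm$; (4) a shear in $z$ to absorb $a_{33}^\pm$; (5) check that after all these changes the tangency sets are still exactly $L_{X^\pm}^2=\{y=z=0\}$, so that the standing hypothesis $\Sigma=\Sigma_S\cup\Sigma_T^2$ is preserved, and conclude that $Z$ has the form \eqref{formanormal}.

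The main obstacle I expect is \emph{bookkeeping of the admissible coordinate changes}: one must verify that each transformation used (time rescalings, the affine map in $(x,y)$, the shear in $z$) genuinely preserves the structure we need — namely the plane $\Sigma=\{z=0\}$ as the switching manifold, the function $h(x,y,z)=z$ up to positive scalar, the tangency line $\{y=z=0\}$, and crucially the \emph{invisibility} of the folds (so that the sign conditions $b_2^\pm\mp 1$ stay put and we do not accidentally turn an invisible fold into a visible one or a crossing region into a sliding region). A second, minor delicacy is that the linear parts $A^\pm$ are not assumed to commute with anything, so the same coordinate change must simultaneously simplify both $X^+$ and $X^-$; this is exactly why only the transformations fixing $\{y=z=0\}$ pointwise-or-setwise are allowed, and why the constant $m$ survives on the minus side — there is no single affine map that kills $b_1^+$ and $b_1^-$ at once. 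Once these verifications are laid out, the computation itself is routine linear algebra.
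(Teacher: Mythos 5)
Your strategy is essentially the paper's: Lemmas~\ref{retasiguais} and~\ref{confreta} give $a_{21}^\pm=0$ and $\sgn(\pm b_2^\pm)<0$, and then a pair of affine changes of variables, one on each of $\Sigma^\pm$ but agreeing on $\Sigma=\{z=0\}$, combined with positive time rescalings, yields~\eqref{formanormal}; the paper just writes the two maps $\varphi^\pm$ explicitly instead of describing them in stages. One step of your reduction, however, does not work as literally stated. An affine change in the $\Sigma$-variables $(x,y)$ alone must be the \emph{same} on both sides (it acts on the switching plane), and under $X=x+\mu y$ the coefficient of $y$ in the first component becomes $a_{12}^\pm+\mu\,(a_{22}^\pm-a_{11}^\pm)$; no single $\mu$ annihilates this for both $X^+$ and $X^-$ in general, and in fact $\mu=-b_1^+/b_2^+$ is already forced by your requirement of killing $b_1^+$. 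The terms $a_{12}^\pm y$ are instead removed by adding to the $x$-coordinate a multiple of $z$ whose coefficient is allowed to \emph{differ} on $\Sigma^+$ and $\Sigma^-$ — harmless, since such a shear restricts to the identity on $\Sigma$ — and this is precisely the role of the $z$-terms in the paper's twin maps $\varphi^\pm$. So you should extend your step~(4) shear in $z$ to act on the $x$-coordinate as well, with side-dependent coefficients; with that correction your argument coincides with the paper's proof, and your closing remarks about which transformations are admissible and why $m$ survives on the minus side are exactly the right checks.
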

\begin{proof}
From Lemmas~\ref{retasiguais} and~\ref{confreta} we get $L_{X^\pm}^2=\{y=z=0\}$, $a_{21}^\pm= 0$,  and that $\sgn(\pm b_2^\pm)<0$. 
Doing the twin change of variables given by\\
$\varphi^{+}(x,y,z)=(x-(b_1^+/b_2^+)y-((b_1^+a_{11}^+-b_1^+a_{22}^++a_{12}^+b_2^+)/b_2^+)z,y+a_{33}^+z,-b_2^+z)$ on $\Sigma^+$, \\
$\varphi^{-}(x,y,z)=(x-(b_1^+/b_2^+)y-((b_2^+a_{12}^--b_1^+a_{22}^-+b_1^+a_{11}^-)/b_2^+)z,y+a_{33}^-z,b_2^-z)$ on $\Sigma^-$,\\
 and rescaling the time $t\rightarrow-t/b_2^+$ on $\Sigma^+$ and $t\rightarrow,t/b_2^-$ on $\Sigma^-$ we obtain the canonical form~\eqref{formanormal}.
\end{proof}

}

\begin{prop}\label{sollinear}
Consider the boundary value problem  $\dot{\x}=Y^\pm(\x)=P\x+Q^\pm$ with $ \x_0=(x(0),y(0),z(0))=(x_0,y_0,0)$,
\[
P=\left(\begin{array}{ccc}
     \gamma & 0 & \delta \\
     0 & \sigma & \psi \\
     0 & 1 & 0
     \end{array}\right)
\mbox{ and }
Q^\pm =\left( \begin{array}{c}
               M\\ \pm 1 \\ 0
               \end{array}\right),
\]
where $\gamma, \delta, \sigma, \psi, M\in \R$.

Let $\varphi^\pm(t,(x_0,y_0,0))$ be the solutions of $\dot{\x}=Y^\pm(\x)$ and consider the straight line $r_0=\{(x,y,z)\in \R^3: y=y_0, z=0\}$. Let $\tau^\pm\in \R/\{0\}$ such that $z(\tau^\pm)=0$ then $\varphi^\pm(\tau^\pm,r_0)$ is a straight line parallel to $r_0$ given by $r_1=\{(x,y,z)\in \R^3: y=y_1, z=0\}$.
\end{prop}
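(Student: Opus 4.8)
The plan is to exploit the block structure of $P$: the key structural observation is that neither the second nor the third row of the system $\dot{\x}=P\x+Q^\pm$ contains the variable $x$. Writing the system in coordinates,
$$\dot{x}=\gamma x+\delta z+M,\qquad \dot{y}=\sigma y+\psi z\pm1,\qquad \dot{z}=y,$$
the last two equations form a closed linear planar system in $(y,z)$ whose solution $(y(t),z(t))$ is completely determined by the initial data $(y_0,0)$ and is independent of $x_0$. Hence, for a point $(x_0,y_0,0)\in r_0$, the instant $\tau^\pm$ at which $z(\tau^\pm)=0$ and the value $y_1:=y(\tau^\pm)$ do not depend on $x_0$; in particular the same $\tau^\pm$ is a time at which the orbit meets $\{z=0\}$ for every point of $r_0$, and therefore $\varphi^\pm(\tau^\pm,r_0)\subset\{(x,y,z):y=y_1,\ z=0\}$.

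It then remains to check that the first coordinate sweeps out the whole line as $x_0$ varies. Regarding $z(t)$ as a known function, the $x$–equation is a scalar linear ODE, and variation of parameters gives
$$x(\tau^\pm)=e^{\gamma\tau^\pm}x_0+\int_0^{\tau^\pm}e^{\gamma(\tau^\pm-s)}\bigl(\delta z(s)+M\bigr)\,ds,$$
where the integral term is independent of $x_0$. Since $e^{\gamma\tau^\pm}\neq0$, the map $x_0\mapsto x(\tau^\pm)$ is an affine bijection of $\R$, so the image of $r_0=\{(x,y_0,0):x\in\R\}$ is exactly $r_1=\{(x,y_1,0):x\in\R\}$, a straight line parallel to $r_0$. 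Since the system is linear its solutions are global, so there is no issue of defining $\varphi^\pm$ up to time $\tau^\pm$.

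This argument presents no real obstacle; the only point to watch is precisely that $\tau^\pm$, although defined via the orbit through a single base point of $r_0$, serves simultaneously as the time at which every orbit issuing from $r_0$ returns to $\{z=0\}$ — and this is exactly what the decoupling of the $(y,z)$–dynamics from $x$ guarantees. If a cleaner exposition is desired, one can phrase the same fact by noting that the affine change of variables $x\mapsto x$, together with the autonomous nature of the $(y,z)$ block, conjugates the flow to a skew product over the planar flow, and the conclusion is immediate from the triangular form.
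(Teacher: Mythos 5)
Your proof is correct and follows essentially the same route as the paper: the paper's observation that $\e^{Pt}$ has zeroes in positions $(2,1)$ and $(3,1)$ is exactly your remark that the $(y,z)$ subsystem is closed, so that $\tau^\pm$ and $y_1$ depend only on $y_0$, while the $x$-coordinate enters affinely with coefficient $\e^{\gamma\tau^\pm}$. Your explicit check that $x_0\mapsto x(\tau^\pm)$ is an affine bijection (so the image is the \emph{whole} line $r_1$) is a small completeness point the paper leaves implicit.
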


\begin{proof}
The general solution $\varphi^\pm(t,(x_0,y_0,0))$ is 
\[\e^{Pt}\x_0+\e^{Pt}\int_0^t \e^{-P \eta}Qd\eta.\]

Observe that the matrix $\e^{Pt}$ has zeroes at positions $(2,1)$ and $ (3,1)$. So, we can write  the solution as $\varphi^\pm(t,(x_0,y_0,0))=(x^\pm(t),y^\pm(t),z^\pm(t))$ where

\begin{equation}\label{retas}
\begin{array}{ll}
x^\pm(t)= & \e^{\gamma t}  x_0+f_{12}^\pm y_0+f_{13}^\pm ,\\
y^\pm(t)= & f_{22}^\pm y_0+f_{23}^\pm ,\\
z^\pm(t)= & f_{32}^\pm y_0+f_{33}^\pm ,
\end{array}
\end{equation}
where $f_{ij}^\pm=f_{ij}^\pm(t, \gamma,\delta,\sigma,\psi,M), $ for $i,j=1,2,3$.

Now, as the Poincar\'e Application is well defined, there is a $\tau^\pm(y_0)$ such that $z^\pm(\tau^\pm(y_0))=0$. Then $y_1=y^\pm(\tau^\pm(y_0))$ depends only of $y_0$. This implies that all orbits of $Y^\pm$ with origin at $r_0$ intersect $\Sigma=\{z=0\}$ after time $\tau^\pm(y_0)$, i.e., $\varphi^\pm(\tau^\pm(y_0),r_0)$ is the straight line $r_1$.
\end{proof}

Follows from of Proposition~\ref{sollinear} that if $Z$ has a periodic orbit then it is in a invariant cylinder. So, all periodic orbit are $1-$periodic.

\begin{cor}\label{cilindros}
Consider the boundary value problems 
$$
(A):\left\{\begin{array}{lll}
\dot{\x}=X^+(\x),\\
(x(0),y(0),z(0))=(x_0,y_0,0),\\
(x(\tau),y(\tau),z(\tau))=(x_1,y_1,0),
 \end{array}\right.\ \ \ \ \ (B): \left\{\begin{array}{lll}
\dot{\x}= X^-(\x),\\
 (x(0),y(0),z(0))=(\wx_1,\wy_1,0),\\
 (x(\ot),y(\ot),z(\ot))=(\wx_0,\wy_0,0).
   \end{array}\right.
$$
where $X^+$ and $X^-$ are given by \eqref{formanormal}. If $y_0=\wy_0$ and $y_1=\wy_1$ then there is an invariant cylinder for the vector field $Z=(X^+,X^-)$.
\end{cor}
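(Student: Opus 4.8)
The plan is to build the cylinder by concatenating the two families of parallel straight lines produced by Proposition~\ref{sollinear}, applied separately to $X^+$ and $X^-$. First I would check that $X^+$ and $X^-$ in~\eqref{formanormal} have exactly the block structure required by Proposition~\ref{sollinear}: writing $\x=(x,y,z)$, the matrix of each linear part has the first column equal to $(a^\pm,0,0)^T$ (the second and third rows of the linear system are $\dot y = c^\pm y + d^\pm z \mp(\text{const})$ wait — $\dot y=c^\pm y+d^\pm z\mp1$ and $\dot z=y$), so with $\gamma=a^\pm$, $\delta=b^\pm$, $\sigma=c^\pm$, $\psi=d^\pm$, $M=m$ for $X^-$ and $M=0$ for $X^+$, and the constant vector $Q^\pm=(M,\mp1,0)^T$ up to the sign convention, the hypotheses of Proposition~\ref{sollinear} are met for each half. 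Hence the flow of $X^+$ starting on the line $r_0=\{y=y_0,z=0\}$ returns to $\Sigma=\{z=0\}$ along a parallel line $r_1=\{y=y_1,z=0\}$, and likewise the flow of $X^-$ starting on $\{y=\wy_1,z=0\}$ returns to $\Sigma$ along $\{y=\wy_0,z=0\}$.

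Next I would assemble the invariant set. Consider the union $\mathcal{C}$ of all $X^+$-orbit arcs from $r_0$ to $r_1$ together with all $X^-$-orbit arcs from $\{y=\wy_1,z=0\}$ to $\{y=\wy_0,z=0\}$. By Proposition~\ref{sollinear} the first family of arcs, parametrised by $x_0\in\R$, sweeps out a surface whose two boundary curves in $\Sigma$ are $r_0$ and $r_1$; similarly the second family sweeps out a surface with boundary curves $\{y=\wy_1,z=0\}$ and $\{y=\wy_0,z=0\}$. Under the hypotheses $y_0=\wy_0$ and $y_1=\wy_1$, these boundary lines match up in $\Sigma^+$-to-$\Sigma^-$ pairs: $r_0$ coincides with $\{y=\wy_0,z=0\}$ and $r_1$ coincides with $\{y=\wy_1,z=0\}$. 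Because $\Sigma=\Sigma_S\cup\Sigma_T^2$ and, away from the tangency line $\{y=0\}$, every point of $\Sigma$ is a sewing point, the orbit of $Z$ through a point of $r_0$ (resp. $r_1$) is exactly the concatenation of the corresponding $X^+$-arc and $X^-$-arc; thus the two surfaces glue along $r_0\cup r_1$ into a single set $\mathcal{C}$ that is a union of full $Z$-orbits, i.e. an invariant set. Topologically it is a cylinder: one bounded family of arcs above $\Sigma$ and one below, closing up along two parallel lines, is homeomorphic to $S^1\times\R$ (the $\R$-factor being the coordinate along $r_0$, essentially $x_0$, and the $S^1$-factor the loop $X^+$-arc followed by $X^-$-arc).

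I expect the only real subtlety — and the place that needs a careful word rather than a formula — is the well-definedness of the return map and the matching of orientations: one must be sure that the $\tau^\pm$ returning the flow to $\Sigma$ is the \emph{first} return on the correct side (so that the concatenated curve is genuinely a $Z$-orbit and not a spurious reconnection), and that the invisible-fold hypothesis $L_{X^\pm}\equiv L_{X^\pm}^2$ guarantees the arcs leave $\Sigma$ into $\Sigma^+$ (resp. $\Sigma^-$) and come back, rather than grazing $\Sigma$. These are exactly the facts packaged in Proposition~\ref{sollinear} (``the Poincaré application is well defined'') together with Lemma~\ref{confreta}(1), so the argument reduces to invoking them; the computation of $x_1$, $\wx_0$ etc. is not needed for the existence of the cylinder, only the $y$-coordinates matter, which is precisely the content of Proposition~\ref{sollinear}.
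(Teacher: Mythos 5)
Your proposal is correct and follows essentially the same route as the paper: both apply Proposition~\ref{sollinear} to each half-flow to conclude that $\varphi^+$ carries the line $r_0=\{y=y_0,z=0\}$ to $r_1=\{y=y_1,z=0\}$ and $\varphi^-$ carries $\{y=\wy_1,z=0\}$ back to $\{y=\wy_0,z=0\}$, so the matching hypotheses $y_0=\wy_0$, $y_1=\wy_1$ glue the two swept surfaces into an invariant cylinder. The paper's proof is only the two-sentence version of this; your extra remarks on the sewing condition and on the first-return/orientation issue are sound elaborations rather than a different argument.
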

\begin{proof}
Let $\varphi^\pm(t,p)$ be the solutions of $(A)$ and $(B)$ respectively and the straight lines $r_0=\{(x,y,z)\in \R^3: y=y_0, z=0\}$ and $r_1=\{(x,y,z)\in \R^3: y=y_1, z=0\}$. From Proposition~\ref{sollinear}, we have that $\varphi^+(r_0,\tau)=r_1$ and $\varphi^-(r_1,\ot)=r_0$.  So, we obtain an invariant cylinder (See Figure~\ref{calhasupinf}).
\begin{figure}[!htb]
\begin{center}
\def\svgwidth{0.70\textwidth}
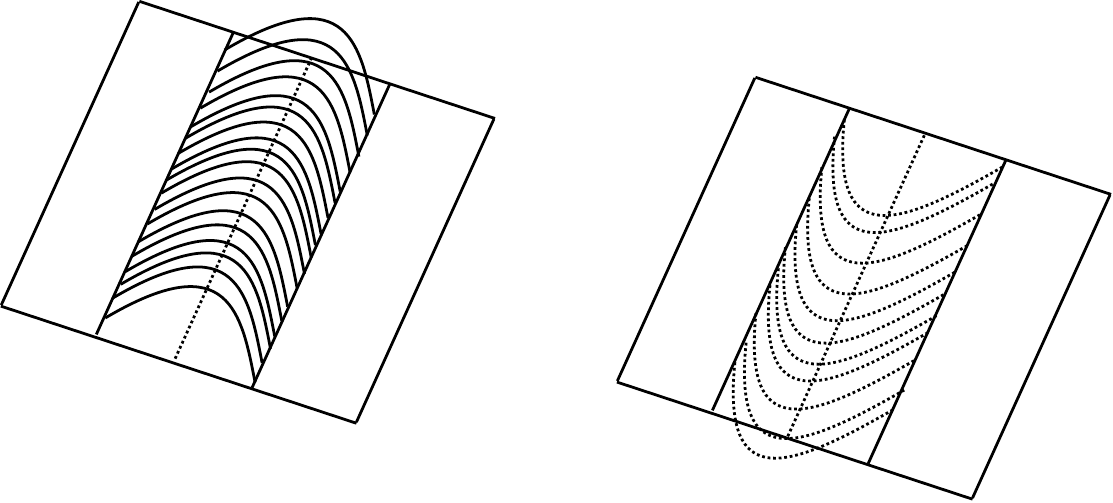
\end{center}
\caption{Superior half cylinder and inferior half cylinder.}\label{calhasupinf}
\end{figure}
\end{proof}
In the next section we will determine a maximum quota for the number of invariant cylinders and in the sequel a maximum quota for the number of cycles in each cylinder.

\smallskip

\section{Proof of Theorems }\label{provateoprincipal}
The proofs of Theorems~\ref{teoprincipal} and~\ref{teoprincipal2} is done in the subsections~\ref{secth1} and~\ref{secth2}, respectively,  except when $Z$ is of type ($\Fo$,$\Fo$) which is proved in the subsection~\ref{secth3}.  

In the subsections~\ref{secth1} and~\ref{secth2} we present a complete proof considering $Z$ of type  ($\Sa$,$\Sa$) under hypothesis given in the statements~(1), (2) and (3) of Theorems~\ref{teoprincipal} and~\ref{teoprincipal2} because for all other cases, except ($\Fo$,$\Fo$), the proof is done in similar way. We present in the Tables~\ref{table1} and~\ref{table3} the distinguished elements used in the proof. Finally, we conclude the proofs of these theorems  in the subsection~\ref{secth3} when  $Z$ is of the type ($\Fo$,$\Fo$).

In order to prove these theorems, when the return applications is defined,  we make a substitution of variables given by Propositions~\ref{selacentrosup} and~\ref{selacentroinf} and we address the proof to determine intersection points of curves which are associated to existence of invariant cylinders. For to determine the number of intersection points between these curves, we use also Theorem~\ref{rolle} proved by Kovanskii(\cite{Rolle}).

\begin{lema}\label{selacentrosup}
Consider the boundary value problem 
\begin{equation}\label{problemafronteira1}
\begin{array}{lll}
x'=a^+x+b^+z,\\
y'=c^+y+d^+z-1,\\
z'=y,
 \end{array}\ \ \ \ \  \begin{array}{lll}
 (x(0),y(0),z(0))=(x_0,y_0,0),\\
 (x(\tau),y(\tau),z(\tau))=(x_1,y_1,0),
   \end{array}
\end{equation}
where $a^+\neq0$. Then the next statements are true.
\begin{enumerate}
\item [$(i)$] If $X^+$ is of the type $\Sa$  doing $(\rho,v,w)=(\e^{a^+\tau},\e^{-(c^+ + s)\tau/2},\e^{(c^+ - s)\tau/2})$  then we get $v=\rho^{\alpha_1}, w=\rho^{\alpha_2}$ and $w=v^{\alpha}$, where $\alpha_1=-(c^+ + s)/2a^+$, $ \alpha_2=(c^+-s)/2a^+$, $s=\sqrt{(c^+)^2+4d^+}$, $\alpha=(s-c^+)/(s+c^+)$. Moreover, 
\begin{equation}\label{vwsa}
\begin{array}{c}
w=\dfrac{c^+y_1-sy_1-2}{c^+y_0-sy_0-2}, \ \ v=\dfrac{c^+y_0+sy_0-2}{c^+y_1+sy_1-2},\\
\; \\
\rho=\displaystyle\frac{4(a^+)^3x_1-4(a^+)^2c^+x_1+a^+(c^+)^2x_1-a^+s^2x_1+4a^+b^+y_1-4b^+}{4(a^+)^3x_0-4(a^+)^2c^+x_0+a^+(c^+)^2x_0-a^+s^2x_0+4a^+b^+y_0-4b^+}.
\end{array}
\end{equation}
\item [$(ii)$] If $X^+$ is of the type $\No$, doing $(\rho,v,w)=(\e^{a^+\tau},\e^{(c^++s)\tau/2},\e^{(c^+-s)\tau/2})$ then we get $v=\rho^{\alpha_1}, w=\rho^{\alpha_2}$ and $w=v^{\alpha}$, where  $\alpha_1=(c^++s)/2a^+$, $ \alpha_2=(c^+-s)/2a^+$, $s=\sqrt{(c^+)^2+4d^+}$, $\alpha=(c^+-s)/(s+c^+)$. Moreover,
$$
w=\frac{c^+y_1-sy_1-2}{c^+y_0-sy_0-2}, \ \ v=\frac{c^+y_1+sy_1-2}{c^+y_0+sy_0-2},
$$
$$
\rho=\displaystyle\frac{4(a^+)^3x_1-4(a^+)^2c^+x_1+a^+(c^+)^2x_1-a^+s^2x_1+4a^+b^+y_1-4b^+}{4(a^+)^3x_0-4(a^+)^2c^+x_0+a^+(c^+)^2x_0-a^+s^2x_0+4a^+b^+y_0-4b^+}.
$$

\item[$(iii)$] If $X^+$ is of the type $\Nd$, doing $(\rho,v,w)=(\e^{a^+\tau},\e^{c^+\tau/2},c^+\tau/2)$ then we get $v=\rho^{\alpha_1}, w=\ln(\rho^{\alpha_1})$ and $w=\ln(v)$, where  $\alpha_1=c^+/2a^+, \alpha=c^+/2$,
$$
w=\frac{2c^+(y_0-y_1)}{(c^+y_1-2)(cy_0-2)}, \ \ v=\frac{cy_1-2}{cy_0-2},
$$
$$
\rho=\frac{4(a^+)^3x_1-4(a^+)^2c^+x_1+a^+(c^+)^2x_1+4a^+b^+y_1-4b^+}{4(a^+)^3x_0-4(a^+)^2c^+x_0+a^+(c^+)^2x_0+4a^+b^+y_0-4b^+}.
$$

\item[$(iv)$] If $X^+$ is of the type $\Fo$, doing  $(\rho,v,w)=(\e^{a^+\tau},\e^{c^+\tau},\tan(s\tau/2))$ then we get $v=\rho^{\alpha_1}, w=\tan(\ln(\rho^{\alpha_2}))$ and $w=\tan(\ln(v^\alpha))$ where $\alpha_1=c^+/a^+,$ $\alpha_2=s/2a^+$, $s=\sqrt{-((c^+)^2+4d^+)}$, $\alpha=s/2c^+$,
$$
w=\frac{2s(-y_1+y_0)}{((c^+)^2+s^2)y_1-2c^+)y_0-2c^+y_1+4},\ v=\frac{4-4c^+y_1+((c^+)^2+s^2)y_1^2}{4-4c^+y_0+((c^+)^2+s^2)y_0^2}
$$
$$
\rho=\frac{4x_1(a^+)^3-4x_1(a^+)^2c^++(((c^+)^2+s^2)x_1+4b^+y+1)a-4b^+}{4(a^+)^3x_0-4c^+(a^+)^2x_0+(((c^+)^2+s^2)x_0+4b^+y_0)a^+-4b^+}.
$$

\item[$(v)$]If $X^+$ is of the type $\Ce$, doing $(\rho,v,w)=(\e^{a^+\tau},\cos(\alpha\tau),\sin(\alpha\tau))$ then we get $v=\cos(\ln(\rho^{\alpha/a^+})), w=\sin(\ln(\rho^{\alpha/a^+}))$ and $w^2+v^2=1$, where $\alpha=\sqrt{-d^+}$,
$$
w=\frac{\alpha(y_0-y_1)}{\alpha^2y_0^2+1}, \ \ v=\frac{\alpha^2y_0y_1+1}{\alpha^2y_0^2+1},
$$
$$
\rho=\displaystyle\frac{a^3x_1+a\alpha^2x_1+aby_1-b}{a^3x_0+a\alpha^2x_0+aby_0-b}.
$$
\item[$(vi)$] If $X^+$ is of the type $\DD_1$, doing $(\rho,v,w)=(\e^{a^+\tau},\e^{c^+\tau},c^+\tau)$ then we get $v=\rho^{\alpha_1}, w=\ln(\rho^{\alpha_1})$ and $w=\ln(v)$, where  $\alpha_1=c^+/a^+, \alpha=1/c^+$,
$$
w = y_0-y_1, \ \ v=\frac{c^+y_1-1}{c^+y_0-1},
$$
$$
\rho=\frac{(a^+)^3x_1-(a^+)^2c^+x_1+a^+b^+y_1-b^+}{(a^+)^3x_0-(a^+)^2c^+x_0+a^+b^+y_0-b^+}.
$$

\item[$(vii)$] If $X^+$ is of the type $\DD_2$, doing $(\rho,v,w)=(\e^{a^+\tau},\tau,\tau^2)$ then we get $v=\ln(\rho^{\alpha}), w=(\ln(\rho^{\alpha}))^2$ and $w=v^2$, where $ \alpha=1/a^+$,
$$
w=2y_0(y_0-y_1), \ \ v=y_0-y_1,
$$
$$
\rho=\frac{(a^+)^3x_1+a^+b^+y_1-b^+}{(a^+)^3x_0+a^+b^+y_0-b^+}.
$$

\end{enumerate}
\end{lema}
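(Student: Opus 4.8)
The plan is to integrate the linear system \eqref{problemafronteira1} explicitly and to read off all of the claimed identities from the closed-form solution. The structural point is that the last two equations, $y'=c^+y+d^+z-1$ and $z'=y$, form an affine planar subsystem that does not involve $x$; differentiating $z'=y$ and substituting $y'$ gives the single second-order equation $z''-c^+z'-d^+z=-1$, whose characteristic roots are exactly $\lambda_2^+=(c^++\sqrt{(c^+)^2+4d^+})/2$ and $\lambda_3^+=(c^+-\sqrt{(c^+)^2+4d^+})/2$. First I would write the solution of this equation with initial data $z(0)=0$, $z'(0)=y(0)=y_0$ in closed form in each of the seven eigenvalue configurations: two distinct nonzero real roots of opposite sign ($\Sa$) or of the same sign ($\No$); a nonzero double root ($\Nd$); a complex pair with nonzero real part ($\Fo$); a purely imaginary pair ($\Ce$); a simple zero root together with a nonzero one ($\DD_1$); and a double zero root ($\DD_2$). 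When $d^+\neq0$ the particular solution is the constant $1/d^+$ and the homogeneous part is the usual combination of the modes $e^{\lambda_2^+t},e^{\lambda_3^+t}$ (with a $t\,e^{\lambda_2^+t}$ term in the $\Nd$ case, and real and imaginary parts $e^{(c^+/2)t}\cos(\cdot)$, $e^{(c^+/2)t}\sin(\cdot)$ in the $\Fo$ and $\Ce$ cases); when $d^+=0$ one replaces it by the polynomial particular solution $-t/c^+$ if $c^+\neq0$ (type $\DD_1$) and $-t^2/2$ if $c^+=0$ (type $\DD_2$).

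Next I would impose the remaining boundary conditions $z(\tau)=0$ and $y(\tau)=z'(\tau)=y_1$, where $\tau\neq0$ is the first return time to $\{z=0\}$ furnished by Proposition~\ref{sollinear}. Writing the solution as $z(t)=A\,\theta_2(t)+B\,\theta_3(t)+z_p(t)$ with $\theta_2,\theta_3$ the two relevant modes, the two conditions at $t=0$ determine $A$ and $B$ as explicit affine functions of $y_0$, while the two conditions at $t=\tau$ determine the products $A\,\theta_2(\tau)$, $B\,\theta_3(\tau)$ (or the analogous combinations in the non-hyperbolic cases) as the same affine functions evaluated at $y_1$. Taking quotients then produces $\theta_2(\tau)$, $\theta_3(\tau)$; using $\lambda_2^+\lambda_3^+=-d^+$ these collapse to the displayed ratios for $v$ and $w$ in terms of $y_0,y_1$, with $s=\sqrt{(c^+)^2+4d^+}$ in the hyperbolic cases, $s=\sqrt{-((c^+)^2+4d^+)}$ in the oscillatory ones, and $\alpha=\sqrt{-d^+}$ in the center case. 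Since by definition $v$ and $w$ are themselves powers (respectively logarithms, or tangents of logarithms) of $e^{\tau}$, while $\rho=e^{a^+\tau}$, the relations $v=\rho^{\alpha_1}$, $w=\rho^{\alpha_2}$, $w=v^{\alpha}$ and their logarithmic or trigonometric analogues in the degenerate types are pure exponent arithmetic, with $\alpha_1,\alpha_2,\alpha$ the appropriate ratios of $-(c^++s)/2$, $(c^+-s)/2$, $c^+$, $s$ by $a^+$ or by one another.

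It remains to obtain the formula for $\rho=e^{a^+\tau}$. The direct route is to solve $x'=a^+x+b^+z$ by variation of parameters using the $z(t)$ just found, then impose $x(0)=x_0$, $x(\tau)=x_1$ and solve for $e^{a^+\tau}$; all the integrals are elementary. A cleaner route, which I would prefer to present, is to exhibit an affine first integral of exponential growth: seeking $\psi(x,y,z)=x+\mu y+\nu z+\kappa$ with $\dot\psi=a^+\psi$ along the flow forces $\nu=\mu(a^+-c^+)$, $\mu((a^+)^2-a^+c^+-d^+)=b^+$ and $\kappa=-\mu/a^+$; since $(a^+)^2-a^+c^+-d^+=(a^+-\lambda_2^+)(a^+-\lambda_3^+)$ and $a^+\neq0$, such a $\psi$ exists (the exceptional resonant case $a^+\in\{\lambda_2^+,\lambda_3^+\}$ being covered by the variation-of-parameters computation, or by continuity in the parameters). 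Then $\psi(\varphi^+(t,\x_0))=e^{a^+t}\psi(\x_0)$, and evaluating at the two endpoints, both lying on $\{z=0\}$, gives
\[
e^{a^+\tau}=\frac{a^+x_1+a^+\mu y_1-\mu}{a^+x_0+a^+\mu y_0-\mu}
=\frac{a^+(a^+-\lambda_2^+)(a^+-\lambda_3^+)x_1+a^+b^+y_1-b^+}{a^+(a^+-\lambda_2^+)(a^+-\lambda_3^+)x_0+a^+b^+y_0-b^+},
\]
and expanding $(a^+-\lambda_2^+)(a^+-\lambda_3^+)=(a^+)^2-a^+c^+-d^+$ together with $4d^+=s^2-(c^+)^2$ (respectively $4d^+=-s^2-(c^+)^2$, respectively $d^+=-\alpha^2$) puts numerator and denominator into exactly the form stated in each of the seven items.

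The argument is thus entirely computational and uses nothing beyond elementary linear ODE theory; the real work is the bookkeeping across the seven eigenvalue configurations. I expect the genuinely delicate points to be: choosing the correct particular solution in the degenerate cases $c^+d^+=0$; controlling the branch of the logarithm or arctangent and the choice of first return to $\{z=0\}$ in the $\Fo$ and $\Ce$ cases, so that $\tan(s\tau/2)$, $\cos(\alpha\tau)$, $\sin(\alpha\tau)$ are the quantities actually produced by the return map; and tracking the sign conventions in $v=e^{-\lambda_2^+\tau}$ versus $w=e^{\lambda_3^+\tau}$ in the $\Sa$ case when converting the exponent ratios into $\alpha_1,\alpha_2,\alpha$. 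None of these is conceptually hard, so the main obstacle is organizational rather than mathematical.
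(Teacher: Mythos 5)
Your proposal is correct and follows essentially the same route as the paper: the paper likewise writes the explicit solution of the linear system (via Proposition~\ref{sollinear}) and then solves the three boundary conditions $x(\tau)=x_1$, $y(\tau)=y_1$, $z(\tau)=0$ algebraically for the unknowns $(\rho,v,w)$, case by case. Your affine eigenfunction $\psi$ with $\dot\psi=a^+\psi$ is a tidier way to extract $\rho$ than the paper's direct inversion of the $x$-component (and it reproduces the stated numerators exactly, via $(a^+)^2-a^+c^+-d^+$ and $4d^+=s^2-(c^+)^2$), but this is an organizational refinement rather than a different argument.
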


\begin{proof}
$(i)$ By the Proposition~\ref{sollinear} the solution $(x(t),y(t),z(t))$ is 
\[
(\e^{\lambda_1t}x_0+\eta y_0+\mu,
\dfrac{\lambda_2\e^{\lambda_2t}-\lambda_3\e^{\lambda_3t}}{s}y_0\pm\dfrac{\e^{\lambda_2t}-\e^{\lambda_3t}}{s},
\frac{\e^{\lambda_2t}-\e^{\lambda_3t}}{s}y_0\mp\frac{\e^{\lambda_3t}-\e^{\lambda_2t}-s}{\lambda_2\lambda_3}),
\]
where $\lambda_1=a^+$, $\lambda_{2,3}=(c^+\pm s)/2$ and $(\eta,\mu)$ obtained replacing $(\delta,M,\sigma,\psi,\pm1)=(b^+,0,c^+,d^+,-1)$. Now, we solve the system $\{x(\tau)=x_1, y(\tau)=y_1,z(\tau)=0\}$ in $\{\rho, v, w\}$ where $(\rho,v,w)=(\e^{a^+\tau},\e^{-(c^+ + s)\tau/2},\e^{(c^+ - s)\tau/2})$ and we obtain \eqref{vwsa}.\\
The remaining cases are obtained analogously.
\end{proof}

In similar way we obtain analogous result associated to $X^-$.

\begin{lema}\label{selacentroinf}
Consider the boundary value problem 
\begin{equation}\label{problemafronteira2}
 \begin{array}{lll}
x'=a^-x+b^-z+m,\\
y'=c^-y+d^-z+1,\\
z'=y,
 \end{array}\ \ \ \ \  \begin{array}{lll}
 (x(0),y(0),z(0))=(\wx_1,\wy_1,0),\\
 (x(\overline{\tau}),y(\overline{\tau}),z(\overline{\tau}))=(\wx_0,\widetilde{y_0},0), \\
   \end{array}
\end{equation}
where $a^-\neq0$. Then the next statements are true.
\begin{enumerate}
\item[$(i)$]  If $X^-$ is of the type $\Sa$ and $\No$,  doing $(\xi,V,W)=(\e^{a^-\overline{\tau}},\e^{-(c^-+S)\overline{\tau}/2},\e^{-(-c^-+S)\overline{\tau}/2})$ then we get $V=\xi^{\beta_1}, W=\xi^{\beta_2}$ and $W=V^\beta$, where $\beta_1=-(c^-+S)/2a^-$, $\beta_2=(c^--S)/2a^-, S=\sqrt{(c^-)^2+4d^-}$, $\beta=(S-c^-)/(S+c^-)$. Moreover,
\begin{equation}\label{VWsa}
\begin{array}{c}
W=\dfrac{S\wy_0-c^-\wy_0-2}{S\wy_1-c^-\wy_1-2}, \ \ V=\dfrac{S\wy_1+c^-\wy_1+2}{S\wy_0+c^-\wy_0+2},\\
\; \\
\xi=\displaystyle\frac{(S^2+4a^-(c^--a^-))m+(S^2+c^-(4a^--c^-)-4(a^-)^2)a^-\wx_0-4b^-(a^-\wy_0+1)}{(S^2+4a^-(c^--a^-))m+(S^2+c^-(4a^--c^-)-4(a^-)^2)a^-\wx_1-4b^-(a^-\wy_1+1)}.
\end{array}
\end{equation}

\item[$(ii)$]  If $X^-$ is of the type $\Nd$, doing $(\xi,V,W)=(\e^{a^-\overline{\tau}},\e^{c^-\ot/2},c^-\ot/2)$ then we get $V=\xi^{\beta_1}$, $W=\ln(\xi^{\beta_1})$ and $W=\ln(V)$, where $\beta_1=c^-/2a^-$, $\beta=c^-/2$,
$$
W=\dfrac{4c^-(\wy_0-\wy_1)}{2(c^-\wy_1+2)(c^-\wy_0+2)}, \ \ V=\dfrac{c^-\wy_0+2}{c^-\wy_1+2},
$$
$$
\xi=\frac{((c^-)^2-4c^-a^-+4(a^-)^2)m+((c^-)^2a^--4c^-(a^-)^2+4(a^-)^3)\wx_0+4b^-(a^-\wy_0+1)}{((c^-)^2-4c^-a^-+4(a^-)^2)m+((c^-)^2a^--4c^-(a^-)^2+4(a^-)^3)\wx_1+4b^-(a^-\wy_1+1)}.
$$

\item[$(iii)$]  If $X^-$ is of the type $\Fo$, doing $(\xi,V,W)=(\e^{a^-\overline{\tau}}, \e^{c^-\ot},\tan(S\ot/2))$ then we get $V=\xi^{\beta_1}, W=\tan(\ln(\xi^{\beta_2}))$ and $W=\tan(\ln(V^{\beta}))$ where $\beta_1=c^-/a^-, \beta_2=S/2a^-, S=\sqrt{-((c^-)^2+4d^-)}, \beta=S/2c^-$,
$$
W=\dfrac{2S(\wy_0-\wy_1)}{S^2\wy_0\wy_1+(c^-)^2\wy_0\wy_1+2c^-\wy_0+2c^-\wy_1+4}, \ \ V=\dfrac{S^2\wy_0^2+(c^-)^2\wy_0^2+4c^-\wy_0+4}{S^2\wy_1^2+(c^-)^2\wy_1^2+4c^-\wy_1+4}
$$
$$
\xi=\frac{(S^2+(c^-)^2+4a^-)m+((S^2+(c^-)^2)a^-+4(a^-)^2(a^--c^-))\wx_0+4b^-(a^-\wy_0+1)}{(S^2+(c^-)^2+4a^-)m+((S^2+(c^-)^2)a^-+4(a^-)^2(a^--c^-))\wx_1+4b^-(a^-\wy_1+1)}.
$$

\item[$(iv)$]  If $X^-$ is of the type $\Ce$, doing $(\xi,V,W)=(\e^{a^-\overline{\tau}},\cos(\beta\ot),\sin(\beta\ot))$ then we get  $V=\cos(\ln(\xi^{\frac{\beta}{a^-}})), W=\sin(\ln(\xi^{\frac{\beta}{a^-}}))$ and $W^2+V^2=1$, where $ \beta=\sqrt{-d^-}$ and
$$
V=\frac{-\beta^2\wy_0\wy_1-1}{-\beta^2\wy_1^2-1}, \ \ W=\frac{\beta(\wy_1-\wy_0)}{-\beta^2\wy_1^2-1},
$$
$$
\xi=\frac{\beta^2a^-\wx_0+(a^-)^3\wx_0+\beta^2m+b^-a^-\wy_0+m(a^-)^2+b^-}{\beta^2a^-\wx_1+(a^-)^3\wx_1+\beta^2m+b^-a^-\wy_1+m(a^-)^2+b^-}.
$$

\item[$(v)$] If $X^-$ is of the type $\DD_1$, doing $(\xi,V,W)=(\e^{a^-\overline{\tau}},\e^{c^-\ot},c^-\ot)$ then we get $V=\xi^{\beta_1}, W=\ln(\xi^{\beta_1})$ and $W=\ln(V)$, where $\beta_1=c^-/a^-, \beta=1/c^-$,
$$
W=c^-(\wy_0-\wy_1), \ \ V=\dfrac{c^-\wy_0+1}{c^-\wy_1+1},
$$
$$
\xi=\frac{-c^-(a^-)^2\wx_0+(a^-)^3\wx_0+b^-a^-\wy_0-c^-ma^-+m(a^-)^2+b^-}{-c^-(a^-)^2\wx_1+(a^-)^3\wx_1+b^-a^-\wy_1-c^-ma^-+m(a^-)^2+b^-}.
$$

\item[$(vi)$] If $X^-$ is of the type $\DD_2$, doing $(\xi,V,W)=(\e^{a^-\overline{\tau}},\ot,\ot^2)$ then we get $V=\ln(\xi^\beta), W=(\ln(\xi^\beta))^2$ and $W=V^2$ where $\beta=1/a^-$, 
$$
W=2\wy_1(\wy_1-\wy_0), \ \ V=\wy_0-\wy_1,
$$
$$
\xi=\frac{(a^-)^3\wx_0+b^-a^-\wy_0+m(a^-)^2+b^-}{(a^-)^3\wx_1+b^-a^-\wy_1+m(a^-)^2+b^-}.
$$

\end{enumerate}
\end{lema}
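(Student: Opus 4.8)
The plan is to mirror, essentially verbatim, the argument used for Lemma~\ref{selacentrosup}. The point is that $Y^-=P\x+Q^-$ with the $P$, $Q^-$ of Proposition~\ref{sollinear} has exactly the same block structure as $X^+$ in~\eqref{formanormal}: the only differences are the constant $m$ in the first component, the $+1$ in place of $-1$ in the second component, and the fact that the boundary value problem~\eqref{problemafronteira2} runs from time $0$ at $(\wx_1,\wy_1,0)$ to time $\ot$ at $(\wx_0,\wy_0,0)$ rather than from $0$ to $1$. So I would first invoke Proposition~\ref{sollinear} with the identification $(\gamma,\delta,\sigma,\psi,M)=(a^-,b^-,c^-,d^-,m)$ and forcing sign $+1$, which expresses $\varphi^-(t,(\wx_1,\wy_1,0))=(x^-(t),y^-(t),z^-(t))$ in the triangular form~\eqref{retas}: $x^-(t)=\e^{a^-t}\wx_1$ plus a function of $\wy_1$ and $t$, while $y^-$ and $z^-$ are affine in $\wy_1$ with coefficients depending only on $t$ and on the eigenvalues $\lambda_{2,3}^-$ of the $2\times2$ block $\left(\begin{smallmatrix} c^- & d^- \\ 1 & 0\end{smallmatrix}\right)$.

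Next, for each of the seven types I would compute the relevant part of $\e^{Pt}$ explicitly: in the $\Sa$ and $\No$ cases the block diagonalizes with real eigenvalues $\lambda_{2,3}^-=(c^-\pm S)/2$, $S=\sqrt{(c^-)^2+4d^-}$, producing pure exponentials $\e^{\lambda_2^- t},\e^{\lambda_3^- t}$; in the $\Nd$, $\DD_1$, $\DD_2$ cases there is a repeated (or zero) eigenvalue with a nontrivial Jordan part, so the solution picks up the polynomial factors $\ot$ and $\ot^2$; and in the $\Fo$, $\Ce$ cases the eigenvalues are complex ($c^-/2\pm iS/2$, resp.\ $\pm i\sqrt{-d^-}$), so the block contributes $\e^{c^-t}\cos(St/2),\e^{c^-t}\sin(St/2)$, resp.\ $\cos(\sqrt{-d^-}\,t),\sin(\sqrt{-d^-}\,t)$. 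In each case I then impose $z^-(\ot)=0$ and solve the linear system $\{x^-(\ot)=\wx_0,\ y^-(\ot)=\wy_0,\ z^-(\ot)=0\}$ for the auxiliary triple $(\xi,V,W)$, which by definition are the corresponding monomials / exponentials / trigonometric functions of $\ot$. Because of the triangularity, $z^-(\ot)=0$ together with $y^-(\ot)=\wy_0$ determines $V$ and $W$ from $\wy_0,\wy_1$ alone, and then $x^-(\ot)=\wx_0$ yields $\xi$, with $m$ entering linearly; this reproduces the closed forms in $(i)$–$(vi)$. The relations $V=\xi^{\beta_1}$, $W=\xi^{\beta_2}$, $W=V^{\beta}$ (or their $\ln$ / $\tan(\ln(\cdot))$ / $V^2+W^2=1$ analogues) then follow by eliminating $\ot$ among the defining formulas, i.e.\ by comparing the exponent ratios such as $\beta_1=-(c^-+S)/2a^-$, $\beta_2=(c^--S)/2a^-$, $\beta=(S-c^-)/(S+c^-)$.

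The only real work here is bookkeeping, not conceptual difficulty: one must carry all seven types through their degenerate subcases while tracking the sign conventions forced by the reversed endpoints and by the $+1$ (rather than $-1$) forcing term — this is precisely what flips the linear and constant terms in $(i)$–$(iv)$ relative to their counterparts in Lemma~\ref{selacentrosup} — and one must keep $m$ correctly threaded through the first-component equation. Since each individual case is a finite linear computation with the same structure as the $\Sa$ computation written out in the proof of Lemma~\ref{selacentrosup}, I would present that one case in full (verifying~\eqref{VWsa}) and state that the remaining six are obtained analogously, exactly as done there.
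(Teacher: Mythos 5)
Your proposal is correct and follows essentially the same route as the paper: the paper's proof of this lemma is simply the statement that it is analogous to Lemma~\ref{selacentrosup}, whose proof likewise invokes Proposition~\ref{sollinear} with the appropriate parameter identification, solves the boundary system $\{x(\ot)=\wx_0,\ y(\ot)=\wy_0,\ z(\ot)=0\}$ for the auxiliary triple, and declares the remaining types analogous. Your added care about the sign conventions coming from the $+1$ forcing term, the constant $m$, and the reversed endpoint labels is exactly the bookkeeping the paper leaves implicit.
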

\begin{proof}
The proof is analogous to the proof of Lemma~\ref{selacentrosup}.
\end{proof}

\begin{thm}\textbf{(Kovanskii, \cite{Rolle})} \label{rolle}
Let $X$ be a $C^1$ planar vector field without singular points in an open region $\Omega\subset\mathbb{R}^2$. If a $C^1$ curve, $\gamma\subset\Omega$, intersects an integral curve of $X$ at two points then in between these points, there exists a point of tangency between $\gamma$ and $X$.
\end{thm}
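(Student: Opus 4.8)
The plan is to reduce Theorem~\ref{rolle} to the classical Rolle theorem by straightening $X$ with the flow-box theorem. I would parametrize $\gamma$ as a $C^1$ map $s\mapsto\gamma(s)$ on a compact interval $[s_0,s_1]$ with $\gamma(s_0)=p_0$ and $\gamma(s_1)=p_1$ the two points of $\gamma\cap\Gamma$, where $\Gamma$ denotes the given integral curve, and argue by contradiction, assuming that $\gamma$ has no tangency with $X$ on the open arc $(s_0,s_1)$. Since $X$ vanishes nowhere in $\Omega$, a point $\gamma(s)$ is a tangency point of $\gamma$ with $X$ exactly when $\gamma'(s)$ and $X(\gamma(s))$ are linearly dependent; so the assumption forces $\gamma'(s)\neq 0$ and $\gamma$ transverse to the orbit foliation $\mathcal{F}$ of $X$ for every $s\in(s_0,s_1)$.

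Then I would invoke the flow-box theorem: as $X$ is a nonsingular $C^1$ field, every point of $\Omega$ has a $C^1$ chart in which $X=\partial/\partial x_1$, so the orbits are the horizontal segments and the second coordinate $x_2$ is a local first integral $H$ with $dH\neq 0$ and $dH(X)\equiv 0$. Transversality of $\gamma$ to $\mathcal{F}$ then reads $(H\circ\gamma)'(s)=dH_{\gamma(s)}(\gamma'(s))\neq 0$ as long as $\gamma$ stays in the chart, i.e.\ $H\circ\gamma$ is strictly monotone there. Because $\mathcal{F}$ is transversely orientable in the plane --- a right-angle rotation of $X$ is a nowhere-zero section of the normal line bundle $T\Omega/\langle X\rangle$ --- the local first integrals can be chosen with a coherent co-orientation, so that $H\circ\gamma$ is strictly increasing through every flow box that $\gamma$ meets.

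To conclude I would cover the compact set $\gamma([s_0,s_1])$, together with a neighbourhood of the relevant sub-arc of $\Gamma$, by finitely many such flow boxes; on overlaps the corresponding first integrals agree up to increasing $C^1$ reparametrizations, and patching them yields a single $C^1$ first integral $H$, defined near that set, with $dH$ nowhere zero and with $\Gamma$ one of its level sets. By the second step $H\circ\gamma$ is strictly monotone on $(s_0,s_1)$, hence by continuity $H(p_0)\neq H(p_1)$; but $p_0,p_1\in\Gamma$ gives $H(p_0)=H(p_1)$, a contradiction (this is precisely where Rolle's theorem enters). Therefore $\gamma$ must have a tangency with $X$ somewhere in $(s_0,s_1)$.

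The step I expect to be the real obstacle is the patching in the last paragraph: a priori the local first integrals only glue to a function taking values in the (possibly non-Hausdorff, or circle-like) leaf space of $\mathcal{F}$, and then one cannot automatically deduce $H(p_0)=H(p_1)$ from $p_0,p_1\in\Gamma$. The classical remedy, due to Khovanskii \cite{Rolle}, is to pass to a covering of a neighbourhood of $\gamma([s_0,s_1])\cup\Gamma$ on which $\mathcal{F}$ carries a genuine real-valued first integral, to lift the \emph{arc} $\gamma|_{[s_0,s_1]}$ (possible since an arc is simply connected) together with the relevant arc of $\Gamma$, and to run the Rolle argument upstairs; this is also where one uses that $\Gamma$ separates the ambient region (automatic, for instance, when $\Omega$ is simply connected, which is the case in all the applications we make of this theorem). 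A purely topological variant --- the arc of $\gamma$ and an arc of $\Gamma$ between $p_0$ and $p_1$ would bound a disk on which $X$ is transverse along the $\gamma$-part and tangent along the $\Gamma$-part, contradicting, via a degree argument, the absence of zeros of $X$ inside --- is appealing but runs into the same separation subtlety.
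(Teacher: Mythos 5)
The paper offers no proof of Theorem~\ref{rolle} --- it is imported verbatim from Khovanskii's work --- so your proposal has to stand on its own, and on its own it does not close. The decisive gap is exactly the one you flag, and it is not merely technical, because the theorem as literally stated (for an arbitrary open region $\Omega$) is \emph{false}: take $\Omega=\mathbb{R}^2\setminus\{(0,0)\}$ and $X(x,y)=(x-y,\,x+y)$, which is nonsingular on $\Omega$ and in polar coordinates reads $\dot r=r$, $\dot\theta=1$. Every orbit is a logarithmic spiral meeting the positive $x$-axis at the infinitely many points $r=r_0e^{2\pi k}$, while $X(x,0)=(x,x)$ is never tangent to that axis, and the segment of the axis between two consecutive intersection points lies entirely in $\Omega$. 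Hence no proof can succeed without an extra hypothesis ($\Omega$ simply connected, or the orbit a separating ``Rolle'' leaf), and your argument never isolates where that hypothesis enters. Concretely, the patching step is circular as proposed: passing to a covering removes monodromy, but producing a single-valued $H$ that is simultaneously strictly monotone along $\gamma$ and constant on $\Gamma$ amounts to knowing that the relevant piece of the leaf space is an interval and that $\gamma$ cannot return to the leaf $\Gamma$ --- which is essentially the statement to be proved. The ``coherent co-orientation'' across overlapping flow boxes is asserted, not established, and it is precisely what fails in the spiral example.

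The argument that does work, under the hypothesis that $\Omega$ is simply connected (which holds in every application in this paper, where $\Omega=\Delta=(0,1)^2$ and $\widetilde{X}=(v,\alpha w)$ is nonsingular there), is the ``topological variant'' you mention and set aside; the separation subtlety you worry about disappears because a Jordan curve in a simply connected open subset of the plane bounds a disk contained in that subset. Replace $p_0,p_1$ by two consecutive intersection points of $\gamma$ with the orbit $\Gamma$ (possible since the intersection parameter set is compact and, if it were the whole interval, every point would already be a tangency), and suppose there is no tangency on the open $\gamma$-arc between them. The $\gamma$-arc and the $\Gamma$-arc joining $p_0$ to $p_1$ form a Jordan curve $J$ whose compact disk $D$ lies in $\Omega$. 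Along the open $\gamma$-arc the function $s\mapsto\det\bigl(\gamma'(s),X(\gamma(s))\bigr)$ never vanishes, hence keeps a constant sign, so $X$ points into $D$ along all of that arc or out of $D$ along all of it, while $X$ is tangent to $J$ along the $\Gamma$-arc. By uniqueness of solutions, $D$ is then positively or negatively invariant, and the Poincar\'e--Bendixson theorem produces inside $D$ either a singular point or a periodic orbit, the latter enclosing a singular point by the index theorem; either way the nonsingularity of $X$ on $D\subset\Omega$ is contradicted. This route needs neither first integrals nor coverings, only the Jordan curve theorem and Poincar\'e--Bendixson, and it is the form in which the theorem should be invoked (and, ideally, stated) in this paper.
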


\subsection{Proof of Theorem~\ref{teoprincipal}}\label{secth1}
Considering the boundary value problem~\eqref{problemafronteira1} and statement~$(i)$ of Lemma~\ref{selacentrosup} follows that $\alpha>1$, $0<v,w<1$ and $w=v^\alpha$. Expliciting  $x_0, y_0, y_1$ in \eqref{vwsa} we get  
$$
y_0=-\dfrac{(-1+\alpha)(\alpha v-vw-\alpha+v)}{c^+(vw-1)\alpha}, \ \  y_1=-\dfrac{(-1+\alpha)(\alpha vw-\alpha w-w+1)}{c^+(vw-1)\alpha},
$$ 
and $x_1=\rho x_0+B$, where
$$
B=\frac{4b^+(a^+\rho y_0-a^+y_1-\rho+1)}{a^+(4(a^+)^2-4a^+c^++(c^+)^2-s^2)}.
$$

From statement $(i)$ of Lemma~\ref{selacentroinf} and expliciting $\wx_1$ in \eqref{VWsa}, we get
\[
W=\frac{S\wy_0-c^-\wy_0-2}{S\wy_1-c^-\wy_1-2}, \ \ V=\frac{S\wy_1+c^-\wy_1+2}{S\wy_0+c^-\wy_0+2} \mbox{ and }
\wx_1=\displaystyle\frac{1}{\xi}{\wx_0}+C,
\]
where 
$$
C=\frac{4a^-b^-(\xi\wy_1-\wy_0)+(\xi-1)m(4(a^-)^2-4c^-a^--(S^2-(c^-)^2))+4(\xi-1)b^-}{a^-\xi(c^--2a^-+S)(-c^-+2a^-+S)}.
$$

Substituting $\wy_0=y_0$ and $\wy_1=y_1$ in the expressions of $V,W$, we consider in the region $\Delta=(0,1)\times(0,1)$ contained in the plane $vw$, the curves 
\begin{equation}\label{cfcF}
\begin{array}{ll}
C_f=&\{(v,w)\in \Delta;f(v,w)=w-v^{\alpha}=0\},\\ 
C_F=& \{(v,w)\in \Delta;F(v,w)=W-V^{\beta}=0\},
\end{array}
\end{equation}
where
$$
V=\frac{(\alpha^2c^-+\alpha\beta c^+-\alpha c^+-\alpha c^-)vw+(-\alpha^2c^-+c^-)w-\alpha\beta c^+ + \alpha c^+ +\alpha c^--c^-}{(\alpha\beta c^+-\alpha c^+-\alpha c^-+c^-)vw+(\alpha^2c^--c^-)v-\alpha^2c^--\alpha\beta c^+ + \alpha c^+ + \alpha c^-},
$$
$$
W=\frac{(c^-\beta(1-\alpha)+c^+\alpha(1-\beta))vw+(\alpha^2-1)c^-\beta v+\alpha(c^+(\beta-1)+c^-\beta(1-\alpha))}{(c^-\alpha\beta(\alpha-1)+c^+\alpha(1-\beta))vw+(1-\alpha^2)c^-\beta w+\alpha c^+(\beta-1)+c^-\beta(\alpha-1)}.
$$

Thus, for each point of intersection of the curves $C_f$ and $C_F$, the piecewise linear vector field $Z$ has an invariant cylinder. Note that the curve $C_F$ does not depend of $a^+$ or $a^-$. Consequently, when $a^+a^-=0$ the number of invariant cilinders is the same.

 When $v\rightarrow 1^-$ we have that $w\rightarrow 1, y_0\rightarrow0$ and $y_1\rightarrow0$. From this fact, follows that  $V\rightarrow1$, $W\rightarrow1$ and both curves $C_f$ and $C_F$ pass through the point $(1,1).$

 Let $\widetilde{X}=(v,\alpha w)$ be a vector field defined in $\Delta$. So,  $C_f$ is an integral curve of $\widetilde{X}$.
 Consider the following system
\begin{equation}\label{sis}
\{F(v,w)=0,\nabla F(v,w)\cdotp \widetilde{X}=0\}.
\end{equation}

We get $\nabla F(v,w)\cdotp \widetilde{X}=f_1(v,w)f_2(v,w)$ where
$$
f_1(v,w)=\frac{y_0\widehat{y_0}y_1\widehat{y_1}(\beta+1)(\alpha+1)\beta (c^-)^2}{\widehat{D}},
$$
$$
f_2(v,w)=\kappa w(v-1)^2+\lambda v(w-1)^2,
$$
with 
$$
\kappa=\alpha^2(\alpha c^-+\beta c^+-c^+-c^-)(\alpha\beta c^--\beta c^+-\beta c^-+c^+),
$$
$$ 
\lambda=(\alpha\beta c^+-\alpha c^+-\alpha c^-+c^-)(\alpha\beta c^++\alpha\beta c^--\alpha c^+-\beta c^-).
$$
 We denote by $\widehat{y_0}, \widehat{y_1}$ and $\widehat{D}$ the denominators of $y_0, y_1$ and $\nabla F(v,w)\cdotp \widetilde{X}$ respectively. Since that $y_0, y_1$ and $\nabla F(v,w)\cdotp \widetilde{X}$ are well defined we obtain $\widehat{y_0}\widehat{y_1}\widehat{D}\neq0$.  Note that the system~\eqref{sis} is equivalent the $\{F(v,w)=0,f_2(v,w)=0\}$.  Let $C_{f_2}$ be the curve $\{(v,w)\in \Delta;f_2(v,w)=0\}$. Now, we get that $(v,w)\in \Delta\cap C_f\cap C_{f_2}$ if and only if $(v,w)$ satisfies $\{w=v^\alpha,\kappa w(v-1)^2+\lambda v(w-1)^2=0 \}$, or equivalently
 $$
\dfrac{\lambda v(v^\alpha-1)^2}{\kappa v^\alpha(v-1)^2}+1=0.
 $$
 This equation admits one zero for $v\in(0,1)$ if $1+\alpha^2\lambda/\kappa>0$, otherwise it does not admit zeros in $(0,1)$. 
 
  \begin{figure}[!htb]
 \begin{center}
 \def\svgwidth{0.25\textwidth}
 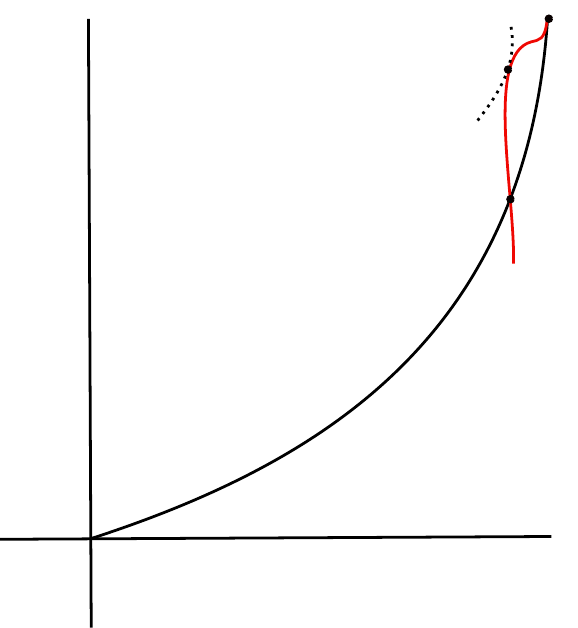
 \end{center}
 \caption{Curves $C_f$ and $C_F$.}\label{con1}
 \end{figure}

Now, we will prove the statement~$(1)$ of Theorem~\ref{teoprincipal}. So, we suppose that $\kappa^2+\lambda^2\neq0$ and $\kappa\lambda=0$. Without loss of generality we can suppose that $\kappa\neq0$ and $\lambda=0$ (the proof when $\kappa=0$ and $\lambda\neq0$ is analogous). In this case $f_2=\kappa w(v-1)^2$. Now, we assume that $C_f$ and $C_F$ intersect at a point $p_1$ in $\Delta$ (see Figure~\ref{con1}). Follows from Theorem~\ref{rolle} that there is $p_2\in\Delta$ and this point is a solution of $\{F=0,f_2=0\}$, but this is a contradiction since $f_2\neq0$ in $\Delta$. Therefore there are no invariant cylinders and consequently there are no periodic orbits, i.e., the differential system has a scroll.
 
 
 The case $\kappa^2+\lambda^2\neq0$ and $\kappa\lambda>0$ is analogous, since that in this case $f_2\neq0$ in $\Delta$.\\
 Suppose now that $1+\alpha^2\lambda/\kappa\le0$ and $\kappa\lambda<0$.
  \begin{figure}[!htb]
  \begin{center}
  \def\svgwidth{0.65\textwidth}
  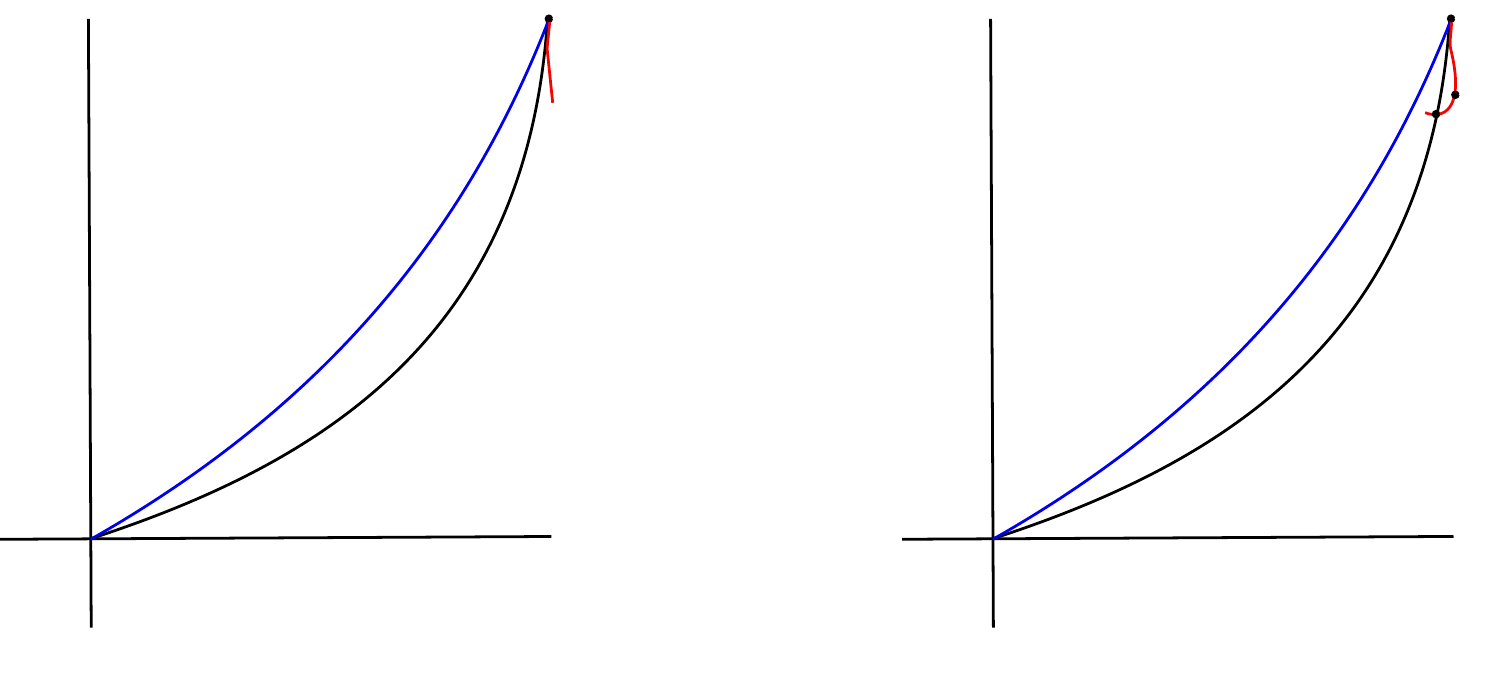
  \end{center}
  \caption{Curves $C_{f_2}$, $C_f$ and $C_F$.}\label{conf1}
  \end{figure}
The curves $C_f$, $C_F$ and $C_{f_2}$ pass through the point $(1,1)$ and we have that $\nabla F(1,1)=(F_v(1,1),F_w(1,1))$, where $F_v(1,1)>0$ and $F_w(1,1)<0$, $\widetilde{X}F(1,1)=\widetilde{X}^2F(1,1)=0$ and $\widetilde{X}^3F(1,1)>0$. We illustrate the relative positions between these curves in the  Figure~\ref{conf1} ($a$).   Assume that $C_f$ and $C_F$ intersect at a point $p_1$ in $\Delta$ (see Figure~\ref{conf1} ($b$)). Follows from Theorem~\ref{rolle} that there is $p_2\in\Delta$ and this point is a solution of $\{F=0,f_2=0\}$, i.e., $p_2\in C_{f_2}$, but this is a contradiction.

Now, in order to prove the statement~$(2)$ we assume that $\kappa\lambda<0$ and $1+\alpha^2\lambda/\kappa>0$. In this case
$\nabla F(1,1)=(F_v(1,1),F_w(1,1))$, where $F_v(1,1)>0$ and $F_w(1,1)<0$, $\widetilde{X}F(1,1)=\widetilde{X}^2F(1,1)=0$ and $\widetilde{X}^3F(1,1)<0$ (see Figure~\ref{conf222} ($a$)).

Assume that $C_f$ and $C_F$ intersect at two points $p_1$ and $p_2$ in $\Delta$ (see Figure~\ref{conf222} ($b$) ).  Follows from Theorem~\ref{rolle} that there are $p_3, p_4\in\Delta$ which  are solutions of $\{F=0,f_2=0\}$, i.e., $p_3, p_4\in C_{f_2}$, but this is a contradiction since that $C_{f_2}$ intersects $C_{f}$ at most at one point.

\begin{figure}[!htb]
 \begin{center}
   \def\svgwidth{0.65\textwidth}
   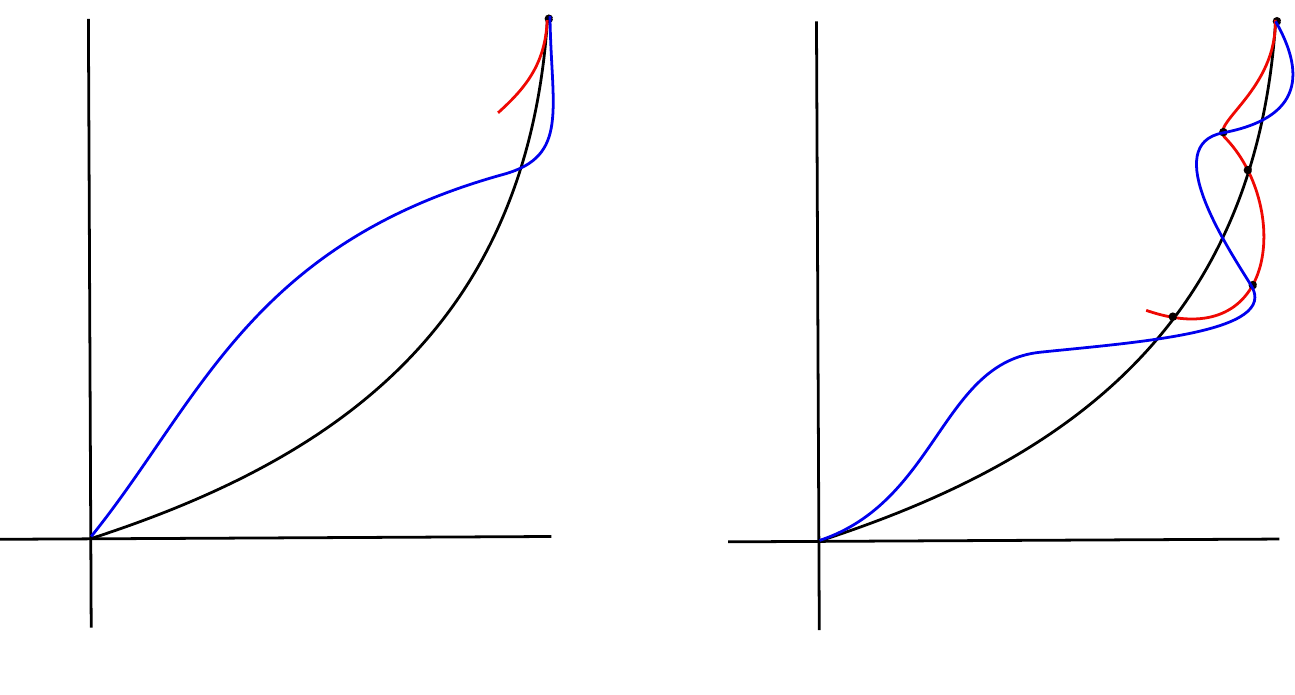
   \end{center}
   \caption{Curves $C_{f_2}$, $C_f$ and $C_F$.}\label{conf222}
   \end{figure}
If the intersection points $p_1$ or $p_2$ of $C_f\cap C_F$ are tangent with odd (or even) multiplicity the proof is analogous.

Finally, if $\kappa=\lambda=0$, then $\nabla F(v,w)\cdotp \widetilde{X}\equiv0$ and thus the curves $C_f$ and $C_F$ are coincident. In this case there is a continuous of invariant cylinders.

The proof for all the other cases is similar to this one done for the case $(\Sa,\Sa)$. In the Tables~\ref{table1} and~\ref{table3}, we present the distinguished elements necessary to analyze the number of intersections points between curves $C_f$ and $C_F$, consequently the number of invariant cylinders. Thus, we conclude the proof of Theorem~\ref{teoprincipal}.

\subsection{Proof the Theorem~\ref{teoprincipal2}}\label{secth2} We present the complete proof about existence of isolated periodic orbits when the vector field $Z$ is of type ($\Sa$,$\Sa$). So, under this hypotesis, we start observing that if $Z$ has no invariant cylinder then do not exist limit cycles.  As the curve $C_F$ given in \eqref{cfcF} does not depend of $a^+$ or $a^-$, the number of invariant cylinders remains the same, independently of the configuration of $a^+$ and $a^-$.

Suppose that $(a^+)^2+(a^-)^2=0$. From boundary value problems \eqref{problemafronteira1} and \eqref{problemafronteira2} and from \eqref{retas} of the Proposition~\ref{sollinear},  we obtain  respectively $x_1=x_0+\eta y_0+\mu$ and $\wx_1=\wx_0+\widetilde{\eta}\wy_1+\widetilde{\mu}$. Here, we obtain $(\eta,\mu)$ and $(\widetilde{\eta},\widetilde{\mu})$ are obtained directly from Proposition~\ref{sollinear} replacing $(\gamma,\delta,M,\sigma,\psi,\pm1)=(a^+,b^+,0,c^+,d^+,-1)$ and $(\gamma,\delta,M,\sigma,\psi,\pm1)=(a^-,b^-,m,c^-,d^-,1)$, respectively. Fixing an invariant cylinder $\wy_0=y_0$ and $\wy_1=y_1$, we get that the return times $\tau$ and $\overline{\tau}$ are also fixed. Thus, in this invariant cylinder, the number of limit cycles is given by the intersections of the straight lines $r^\pm$ given by
$$
r^+: x_1= x_0+\eta y_0+\mu \ \ \ \mbox{ with } \ \ \ r^-: \wx_1=\wx_0-(\widetilde{\eta}y_1+\widetilde{\mu}),
$$
where $\wx_0=x_0$ and $\wx_1=x_1$. Doing $(\wB,\wC)=(\eta y_0+\mu,-(\widetilde{\eta}y_1+\widetilde{\mu}))$ we obtain that either all solutions are closed in the cylinder, if  $\wB=\wC$, or there is no closed solutions in this cylinder when $\wB\neq \wC$.

Suppose that $(a^+)^2+(a^-)^2\neq0$ and $Z$ at most one invariant cylinder. Fixed the invariant cylinder, the number of limit cycles is given by the intersections of $x_1=\rho x_0+B$ if $a^+\neq0$ (or $x_1=x_0+\wB$ if $a^+=0$) and $x_1=x_0/\xi +C$ if $a^-\neq0$ (or $x_1=x_0+\wC$ if $a^-=0$), where $\rho=\e^{a^+\tau}$, $\xi=\e^{a^-\overline{\tau}}$, $(\wB,\wC)$ are obtained as above and  $(B,C)$ obtained in the proof of Theorem~\ref{teoprincipal}. Thus, there is at most one limit cycle.

Suppose that  $(a^+)^2+(a^-)^2\neq0, a^+a^-\geq0$ and $Z$ has infinitely many invariant cylinders. Suppose initially  $a^+a^->0$.  We will show that in each invariant cylinder there is a unique isolated periodic orbit. Indeed, in each cylinder, the orbit periodic is given by the intersection of the straight lines $r^\pm$ given by
$$
r^+:x_1=\rho x_0+B \ \ \ \mbox{and} \ \ \ r^-:x_1=\frac{1}{\xi}x_0+C.
$$
These straight lines has a unique intersection point provided that $\rho\neq 1/\xi$. Note that $\rho=1/\xi \Leftrightarrow\tau=-a^-\overline{\tau}/a^+$, where $\tau$ and $\overline{\tau}$  are positives. With the hypothesis of that $a^+a^->0$, the relation $\tau=-a^-\overline{\tau}/a^+$ can not be satisfied and thus $\rho\neq1/\xi$. The intersection point in each cylinder is given by
$$
x_0=\frac{C-B}{\frac{1}{\xi}-\rho}\ \ \mbox{e}\ \ x_1=\rho\frac{C-B}{\frac{1}{\xi}-\rho}+B.
$$
Varying continuously the cylinders, the terms $x_0$ and $x_1$ also range continuously, and we obtain one invariant surface formed of periodc orbits, where each orbit is an invariant cylinder. If $a^+=0$ and $a^-\neq0$, the periodic orbit in each cylinder is given by intersection of stright lines
$$
x_1=x_0+\wB \ \ \ \mbox{and} \ \ \ x_1=\frac{1}{\xi}x_0+C.
$$
The case $a^+\neq0$ and $a^-=0$ follows analogously. 

\begin{ex} Consider that $a^+=\frac{1}{20},b^+=0,c^+=-\frac{7}{16}, d^+=\frac{5}{8},c^-=\frac{1}{2},d^-=\frac{3}{16},a^-=m=b^-=1$. 
\begin{figure}[!htb] 
\begin{center}
\def\svgwidth{0.4\textwidth}
           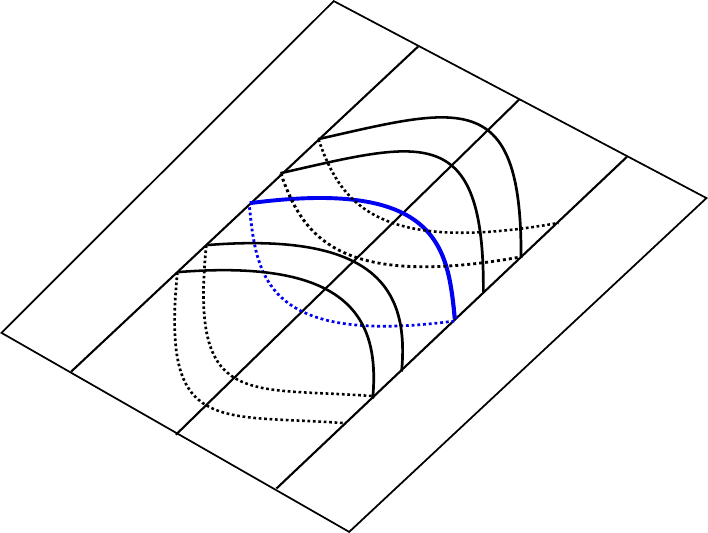
\end{center}
\caption{Invariant cylinder with a unique limit cycle.}\label{1limitcycle}

\end{figure}
\end{ex}

\smallskip

\begin{ex} Consider that  $c^+=c^-=m=0, a^+=b^-=d^+=-1, d^-=-2, b^+=1$ and $a^-=-2$. In this case we get that the vector field $Z$ has infinitely many invariant cylinders. In each invariant cylinder there is a unique limit cycle. All limit cycles form a cone. (see Figure~\ref{cone}).
\end{ex}
\begin{figure}[!htb]
\begin{center}
\def\svgwidth{0.45\textwidth}
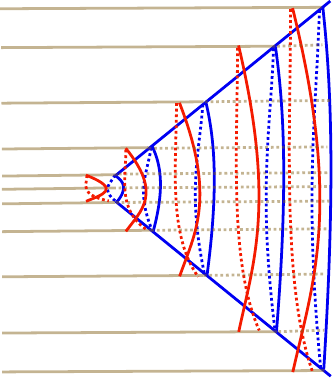
\end{center}
\caption{Cone of Periodic Orbits.}\label{cone}
\end{figure}

\subsection{The case focus-focus}\label{secth3}

As mentioned earlier, the case (Fo,Fo) is not resolved with the techniques of Subsection~\ref{secth1}. We use here \cite{Ja}. For this, we get a different canonical form of~\eqref{formanormal}. Proceeding as in Proposition~\ref{propformacanonica}, doing the twin change of variables given by\\
$\varphi^{+}(x,y,z)=(x-(b_1^+/b_2^+)y+((b_1^+a_{11}^+-b_1^+a_{22}^++a_{12}^+b_2^+)/b_2^+)z,y+a_{22}^+z,z)$ on $\Sigma^+$, \\
$\varphi^{-}(x,y,z)=(x-(b_1^+/b_2^+)y+((b_2^+a_{12}^--b_1^+a_{22}^-+b_1^+a_{11}^-)/b_2^+)z,y+a_{22}^-z,z)$ on $\Sigma^-$,\\
in vector field
 $$
 \begin{array}{l}
 X^+(x,y,z)=(a_{11}^+x+ a_{12}^+y+a_{13}^+z+b_1^+,a_{22}^+y+a_{23}^+z+b_2^+,-y+a_{33}^+z),\\
 X^-(x,y,z)=(a_{11}^-x+ a_{12}^-y+a_{13}^-z+b_1^-,a_{22}^-y+a_{23}^-z+b_2^-,-y+a_{33}^-z),
 \end{array}
$$
we obtain 
\begin{equation}\label{formanormalfoco}
 \begin{array}{l}
X^+(x,y,z)=(a^{+}x+b^{+}z,D_2z+a_2,-y+T_2z),\\
X^-(x,y,z)=(a^{-}x+b^{-}z+m,D_1z+a_1,-y+T_1z).
 \end{array}
\end{equation}

To study the case (Fo-Fo), we must assume that $a_2>0, a_1<0, T_{i}^2-4D_i<0$ and $T_i\neq0$, with $i=1,2$.
The Corollary~\ref{cilindros} is also satisfied in this case. Thus, to determine quotas for the number of limit cycles, we will determine first the number of cylinders invariants. We note that in plane $(y,z)$, the vectors field $X^+$ and $X^-$ are as in Theorem 4 of \cite{Ja}. As the cylinders are determined by straight lines of type $r_0=\{(x,y,z)\in\mathbb{R}^3:y=y_0,z=0\}$  and $r_1=\{(x,y,z)\in\mathbb{R}^3:y=y_1,z=0\}$, we will consider only the components in the plane $(y,z)$.
By Theorems 4 and 5 of \cite{Ja}, there is at most one limit cycle for
$$
 \begin{array}{lll}
y'=D_2z+a_2,\\
z'=-y+T_2z,
 \end{array} \ \ \mbox{if $z>0$},\ \ \ \ \  \begin{array}{lll}
 y'=D_1z+a_1,\\
 z'=-y+T_1z,
  \end{array} \ \ \mbox{if $z<0$},
  $$
with $a_2>0, a_1<0, T_{i}^2-4D_i<0$ and $T_i\neq0$, with $i=1,2$.

 Reflecting this result to our case, the limit cycle becomes an invariant cylinder. Thus, for $X^+$ and $X^-$ as in~\eqref{formanormalfoco}, there is at most one invariant cylinder. Let us consider the boundary value problems
 $$
  \begin{array}{lll}
 x'=a^{+}x+b^{+}z,\\
 y'=D_2z+a_2,\\
 z'=-y+T_2z,
  \end{array}\ \ \ \ \  \begin{array}{lll}
  (x(0),y(0),z(0))=(x_0,y_0,0),\\
  (x(\tau),y(\tau),z(\tau))=(x_1,y_1,0),
  \end{array}
$$
and
$$
 \begin{array}{lll}
x'=a^{-}x+b^{-}z+m,\\
y'=D_1z+a_1,\\
z'=-y+T_1z,
 \end{array}\ \ \ \ \  \begin{array}{lll}
 (x(0),y(0),z(0))=(\wx_1,\wy_1,0),\\
 (x(\overline{\tau}),y(\overline{\tau}),z(\overline{\tau}))=(\wx_0,\wy_0,0).
 \end{array}
$$
Fixed the invariant cylinder, $y_0=\wy_0$ and $y_1=\wy_1$ the times $\tau$ and $\overline{\tau}$ are also fixed. With the boundary value problems above, we can write again
$$
x_1=\rho x_0+\overline{\overline{B}}, \ \ \ \mbox{and}\ \ \ \wx_1=\frac{1}{\xi}\wx_0+\overline{\overline{C}},
$$
where $\rho=\e^{a^{+}\tau}$, $\xi=\e^{a^{-}\overline{\tau}}$ with $\overline{\overline{B}}$ and $\overline{\overline{C}}$
obtained as Proposition~\ref{sollinear}. 
Thus, doing $x_0=\wx_0$ and $x_1=\wx_1$, exists in this cylinder at most one limit cycle. The cases where $a^{+}a^{-}=0$ are studied analogously, noting once again that if $a^{+}=0$ we obtain $x_1=x_0+\overline{\overline{B}}$ and if $a^{-}=0$, $\wx_1=\wx_0+\overline{\overline{C}}$.

 \section*{Acknowledgments}
Both authors are partially supported by the FA\-PEG, by the CNPq
grants numbers 475623/2013-4 and 306615/2012-6 and, by the CAPES
grant numbers PRO\-CAD\- 88881.068462/2014-01 and by CSF/PVE-88881.\-030454/2013-01.

\medskip
\bibliographystyle{plain}
\bibliography{sewing}

\end{document}